\newcolumntype{C}{>{$}c<{$}}
\newtheorem{theorem}{Theorem}
\newtheorem{prop}{Proposition}
\newtheorem{lemma}{Lemma}
\newtheorem{rem}{Remark}
\newtheorem{exmp}{Example}
\begin{document}
\title{$Z$-oriented triangulations of surfaces}
\author{Adam Tyc}
\subjclass[2000]{}
\keywords{directed Eulerian embedding, triangulation of a surface, zigzag, $z$-monodromy, $z$-orientation}

\
\address{Adam Tyc: Institute of Mathematics, Polish Academy of Sciences, \'Sniadeckich 8, 00-656 Warszawa, Poland}
\email{atyc@impan.pl}

\maketitle
\begin{abstract}
The main objects of the paper are $z$-oriented triangulations of connected closed $2$-dimensional surfaces.
A $z$-orientation of a map is a minimal collection of zigzags which double covers the set of edges. 
We have two possibilities for an edge -- zigzags from the $z$-orientation pass through this edge in different directions (type I) or in the same direction (type II). 
Then there are two types of faces in a triangulation: the first type is when two edges of the face are of type I and one edge is of type II and the second type is when all edges of the face are of type II. 
We investigate $z$-oriented triangulations with all faces of the first type (in the general case, any $z$-oriented triangulation can be shredded to a $z$-oriented triangulation of such type).
A zigzag is homogeneous if it contains precisely two edges of type I after any edge of type II.
We give a topological characterization of the homogeneity of zigzags; 
in particular, we describe a one-to-one correspondence between $z$-oriented triangulations with homogeneous zigzags and closed $2$-cell embeddings of directed Eulerian graphs in surfaces. 
At the end, we give an application to one type of the $z$-monodromy.
\end{abstract}

\section{Introduction}
Petrie polygons are well-known objects described by Coxeter \cite{Coxeter} (see also \cite{McMSch}).
These are skew polygons in regular polyhedra such that  any two consecutive edges, but not three, are on the same face.
Analogs of Petrie polygons for graphs embedded in surfaces are called {\it zigzags} \cite{DDS-book,Lins1} or {\it closed left-right paths} \cite{GR-book,Shank}.
Zigzags have many applications, for example, they are successfully exploited to enumerate all combinatorial possibilities for fullerenes \cite{BD}.
The case when a map, i.e an embedding of a graph in a surface, has a single zigzag is very important \cite{DDS-book,GR-book}.
Following \cite{DDS-book} we call such maps $z$-{\it knotted}.
They have nice homological properties and are closely connected to Gauss code problem 
\cite{CrRos, GR-book,Lins2}.

The studying of zigzags in $3$-regular plane graphs, in particular fullerenes, is one of the main directions of \cite{DDS-book}.
A large class of $z$-knotted $3$-regular plane graphs is obtained by using computer.
The dual objects, i.e. spherical triangulations, have the same zigzag structure. 
Zigzags in triangulations of surfaces (not necessarily orientable) are investigated in \cite{PT1,PT2,PT3}.
By \cite{PT2}, every such triangulation admits a $z$-knotted shredding.

A $z$-{\it orientation} of a map is a minimal collection of zigzags which double covers the set of edges \cite{DDS-book}.
In the $z$-knotted case, this collection contains only one zigzag and is unique up to reversing.
For every $z$-orientation we have the following two types of edges:
an edge is of type I if the distinguished zigzags pass through this edge in different directions
and an edge is of type II if they pass through the edge in the same direction. 
It is not difficult to prove that for every face in a triangulation with fixed $z$-orientation one of the following possibilities is realized:
the face contains precisely two edges of type I and the third edge is of type II or all edges are of type II.
We observe that every $z$-oriented triangulation can be shredded to a triangulation where all faces are of the first type. 
In this paper, we restrict ourself to $z$-oriented triangulations with all faces of type I. 

Let $\Gamma$ be such a triangulation of a surface $M$. 
Then the number of edges of type I is the twofold number of edges of type II
and we say that a zigzag is {\it homogeneous} if it contains precisely two edges of type I after each edge of type II. 
Denote by $\Gamma_{II}$ the subgraph of $\Gamma$ formed by all edges of type II. 
Our first result (Theorem \ref{newth1}) states that the following three conditions are equivalent:
\begin{enumerate}
\item[{\rm (1)}] all zigzags of $\Gamma$ are homogeneous,
\item[{\rm (2)}] $\Gamma_{II}$ is a closed 2-cell embedding of a simple Eulerian digraph such that every face is a directed cycle,
\item[{\rm (3)}] each connected component of $M\setminus\Gamma_{II}$ is homeomorphic to an open 2-dimensional disk.
\end{enumerate}
Note that directed Eulerian spherical embeddings are known also as  {\it plane alternating dimaps};
they are investigated, for example, in \cite{BHS, Farr, McCourt}.
Directed Eulerian embeddings in arbitrary surfaces are considered in \cite{BCMMcK, CGH}.

We will use the following structural property of $\Gamma$ (without assumption that the zigzags are homogeneous): 
the connected components of $M\setminus\Gamma_{II}$ are open disks, cylinders or M\"obius strips (the third type of components can be realized only for the non-orientable case) and all these possibilities are realized. 
We show that the existence of cylinders or M\"obius strips contradicts to the homogeneity of zigzags.

By \cite{PT2}, there are precisely $7$ types of $z$-monodromies (M1)--(M7). 
For each of the types (M3)--(M5) and (M7) there is a triangulation such that each face has the $z$-monodromy of this type.
The types (M1) and (M2) are exceptional: all faces with $z$-monodromies of each of these types form a forest \cite{PT3}. 
The case (M6) cannot be investigated by the methods of \cite{PT3} and the authors left it as an open problem.
It is easy to see that each face with the $z$-monodromy (M6) is of the first type for every $z$-orientation. 
Using this fact, we construct a series of toric triangulations where all faces have $z$-monodromies of type (M6).

\section{Zigzags and $z$-orientations of triangulations of surfaces}
Let $M$ be a connected closed $2$-dimensional surface (not necessarily orientable).
A {\it triangulation} of $M$ is a $2$-cell embedding of a connected simple finite graph in $M$ such that all faces are triangles \cite[Section 3.1]{MT-book}.
Then the following assertions are fulfilled:
(1) every edge is contained in precisely  two distinct faces,
(2) the intersection of two distinct faces is an edge or a vertex or empty.

Let $\Gamma$ be a triangulation of $M$.
A {\it zigzag}  in $\Gamma$ is a sequence of edges $\{e_{i}\}_{i\in {\mathbb N}}$ satisfying the following conditions for every natural $i$: 
\begin{enumerate}
\item[$\bullet$] $e_{i}$ and $e_{i+1}$ are distinct edges of a certain face 
(then they have a common vertex, since every face is a triangle),
\item[$\bullet$] the faces containing $e_{i},e_{i+1}$ and $e_{i+1},e_{i+2}$ are distinct
and the edges $e_{i}$ and $e_{i+2}$ are non-intersecting.
\end{enumerate} 
Since $\Gamma$ is finite, 
there is a natural number $n>0$ such that $e_{i+n}=e_{i}$ for every natural $i$. 
In what follows, every zigzag will be presented as a cyclic sequence $e_{1},\dots,e_{n}$,
where $n$ is the smallest number satisfying the above condition.

Every zigzag is completely determined by any pair of consecutive edges  belonging to this zigzag
and for any distinct edges $e$ and $e'$ on a face there is a unique zigzag containing the sequence $e,e'$.
If $Z=\{e_{1},\dots,e_{n}\}$ is a zigzag, then the reversed sequence $Z^{-1}=\{e_{n},\dots,e_{1}\}$ also is a zigzag.
A zigzag cannot contain a sequence $e,e',\dots,e',e$ which implies that
$Z\ne Z^{-1}$ for any zigzag $Z$, i.e. a zigzag cannot be self-reversed (see, for example, \cite{PT2}).
We say that $\Gamma$ is $z$-{\it knotted} if it contains precisely two zigzags $Z$ and $Z^{-1}$, in other words,
there is a single zigzag up to reversing.

\begin{exmp}{\rm
Zigzags in the Platonic solids (three of them are triangulations of the sphere) are skew polygons without self-intersections and are called
 {\it  Petrie polygons}.
}\end{exmp}
\begin{exmp}{\rm
Let $BP_n$ be  the $n$-gonal bipyramid, where $1,\dots,n$ denote the consecutive vertices of the base 
and the remaining two vertices are denoted by $a,b$ (see Fig.1 for $n=3$).  
\begin{center}
\begin{tikzpicture}[scale=0.3]


\coordinate (A) at (0.5,4);
\coordinate (B) at (0.5,-5);
\coordinate (1) at (0,-1);
\coordinate (2) at (-2,0);
\coordinate (3) at (3,0);

\draw[fill=black] (A) circle (3.5pt);
\draw[fill=black] (B) circle (3.5pt);

\draw[fill=black] (1) circle (3.5pt);
\draw[fill=black] (2) circle (3.5pt);
\draw[fill=black] (3) circle (3.5pt);

\draw[thick] (3)--(1)--(2);
\draw[thick, dashed] (2)--(3);

\draw[thick] (A)--(1);
\draw[thick] (A)--(2);
\draw[thick] (A)--(3);

\draw[thick] (B)--(1);
\draw[thick] (B)--(2);
\draw[thick] (B)--(3);

\node at (3.6,0) {$3$};
\node at (-0.45,-1.525) {$2$};
\node at (-2.5,0) {$1$};

\node at (0.5,4.6) {$a$};
\node at (0.5,-5.8) {$b$};

\node[color=white] at (4,0) {$.$};

\end{tikzpicture}
\captionof{figure}{ }
\end{center}
(a). In the case when $n=2k+1$, the bipyramid $BP_n$ is $z$-knotted.
If $k$ is odd, then the unique (up to reversing) zigzag is 
$$a1,12,2b,b3,\dots,a(n-2),(n-2)(n-1),(n-1)b,bn,n1,$$
$$1a,a2,23,3b,\dots, a(n-1),(n-1)n,nb,$$
$$b1,12,2a,a3,\dots,b(n-2),(n-2)(n-1),(n-1)a,an,n1,$$
$$1b,b2,23,3a,\dots,b(n-1),(n-1)n, na.$$
If $k$ is even, then this zigzag is
$$a1,12,2b,b3,\dots,b(n-2),(n-2)(n-1), (n-1)a,an,n1,$$
$$1b,b2,23,3a,\dots,a(n-1),(n-1)n,nb,$$
$$b1,12,2a,a3,\dots,a(n-2),(n-2)(n-1),(n-1)b,bn,n1,$$
$$1a,a2,23,3b,\dots,b(n-1),(n-1)n, na.$$
(b). If $n=2k$ and $k$ is odd, 
then the bipyramid contains precisely two zigzags (up to reversing):
$$a1,12,2b,b3,34,\dots,a(n-1),(n-1)n, nb,$$
$$b1,12,2a,a3,34,\dots,b(n-1),(n-1)n, na$$
and 
$$a2,23,3b,b4,45,\dots,an,n1,1b,$$
$$b2,23,3a,a4,45,\dots,bn,n1,1a.$$
(c). In the case when $n=2k$ and $k$ is even, 
there are precisely four zigzags (up to reversing):
$$a1,12, 2b,\dots,b(n-1),(n-1)n, na;$$ 
$$b1,12, 2a,\dots,a(n-1),(n-1)n, nb;$$
$$a2,23,3b,\dots,bn,n1,1a;$$
$$b2,23,3a,\dots,an,n1,1b.$$
}\end{exmp}
See \cite{PT1,PT2} for more examples of $z$-knotted triangulations. 
Examples of $z$-knotted fullerenes can be found in \cite{DDS-book}.

Suppose that $\Gamma$ contains precisely distinct $k$ zigzags up to reversing. 
A $z$-{\it orien\-tation} of $\Gamma$ is a collection $\tau$ consisting of $k$ distinct  zigzags  such that 
for each zigzag $Z$ we have $Z\in \tau$ or $Z^{-1}\in \tau$. 
There are precisely $2^k$ distinct $z$-orientations of $\Gamma$.
For every $z$-orientation $\tau=\{Z_{1},\dots,Z_{k}\}$ the $z$-orientation $\tau^{-1}=\{Z^{-1}_{1},\dots, Z^{-1}_{k}\}$ will be called {\it reversed} to $\tau$. 
The triangulation $\Gamma$ with a $z$-orientation $\tau$ will be denoted by $(\Gamma, \tau)$ and called a {\it $z$-oriented triangulation}.

Let $\tau$ be a $z$-orientation of $\Gamma$. 
For every edge $e$ of $\Gamma$ one of the following possibilities is realized:
\begin{enumerate}
\item[$\bullet$] there is a zigzag $Z\in \tau$ such that $e$ occurs in this zigzag twice and other zigzags from $\tau$
do not contain $e$,
\item[$\bullet$] there are two distinct zigzags $Z,Z'\in\tau$ such that $e$ occurs in each of these zigzags only ones 
and other zigzags from $\tau$ do not contain $e$. 
\end{enumerate}
In the first case, we say that $e$ is an {\it edge of type} I if $Z$ passes through $e$ twice in different directions; 
otherwise, $e$ is said to be an {\it edge of type} II.
Similarly, in the second case:
$e$ is an {\it edge of type} I if $Z$ and $Z'$ pass through $e$ in different directions
or $e$ is an {\it edge of type} II if $Z$ and $Z'$ pass through $e$ in the same direction.
In what follows, edges of type II will be considered together with the direction defined by $\tau$.
A vertex of $\Gamma$ is called {\it of type} I if it belongs only to edges of type I;
otherwise, we say that this is a {\it vertex of type} II.

The following statements hold for any $z$-orientation $\tau$ of $\Gamma$.

\begin{lemma} \label{lemma1}
For each vertex of type {\rm II} the number of edges of type {\rm II} which enter this vertex
is equal to the number of edges of type {\rm II} which leave it.
\end{lemma}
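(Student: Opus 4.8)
The plan is to work entirely inside the link of the vertex. Fix a vertex $v$ of type II (though the argument will not really use this) and let $d$ be the degree of $v$. List the edges at $v$ as $e_1,\dots,e_d$ in the cyclic order in which they occur around $v$, so that $e_i$ and $e_{i+1}$ (indices mod $d$) span a common face $F_i$ and $F_1,\dots,F_d$ are exactly the faces at $v$. The first step is to record the local behaviour of a zigzag at $v$: if a zigzag runs through $e_i$, its two neighbouring edges lie one in $F_{i-1}$ and one in $F_i$, and the non-intersection requirement in the definition of a zigzag forbids both neighbours to be edges through $v$, and also forbids both to be the edges of $F_{i-1},F_i$ opposite to $v$; so exactly one neighbour is an edge at $v$. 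In other words, each of the two passages through $e_i$ ``turns at $v$'' through exactly one of the two corners $F_{i-1}$, $F_i$.

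Next I would show that $\tau$ selects, for every corner $F_j$, exactly one of the two turning directions $e_j\rightarrow e_{j+1}$, $e_{j+1}\rightarrow e_j$, and uses it exactly once. A zigzag is determined by any consecutive pair of its edges, so a pair cannot occur twice inside one zigzag (this would shorten the period), and by the ``no $e,e',\dots,e',e$'' property a zigzag cannot contain a consecutive pair together with its reverse; since $\tau$ contains exactly one of $Z$, $Z^{-1}$ for each zigzag, the claim follows. Consequently I can attach to each corner $F_j$ a sign $\sigma_j\in\{+,-\}$, setting $\sigma_j=+$ when $\tau$ turns $e_j\rightarrow e_{j+1}$ and $\sigma_j=-$ otherwise, obtaining a cyclic word $\sigma_1\cdots\sigma_d$. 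Note also that, by the count in the previous paragraph, the two passages through $e_i$ turn at the two \emph{distinct} corners $F_{i-1}$ and $F_i$, for otherwise one corner would be used twice.

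The heart of the argument is the dictionary between this word and the types of the edges $e_i$. Tracking which endpoint of $e_i$ the zigzag heads toward, one checks that the direction induced on $e_i$ by the $F_{i-1}$-passage is ``away from $v$'' precisely when $\sigma_{i-1}=+$, and the direction induced by the $F_i$-passage is ``away from $v$'' precisely when $\sigma_i=-$. Hence $e_i$ is of type II exactly when the two induced directions agree, i.e. exactly when $\sigma_{i-1}\neq\sigma_i$; more precisely $e_i$ leaves $v$ when $(\sigma_{i-1},\sigma_i)=(+,-)$ and enters $v$ when $(\sigma_{i-1},\sigma_i)=(-,+)$. Therefore the number of type II edges leaving $v$ equals the number of descents, and the number entering $v$ equals the number of ascents, of the cyclic word $\sigma_1\cdots\sigma_d$; in any cyclic word over two letters the ascents and the descents alternate and so are equinumerous, which proves the lemma.

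I expect the main obstacle to be the bookkeeping in the first and third steps: pinning down the precise local picture of a zigzag at $v$ (which neighbour is the ``turn'' and which is the ``straight'' passage) and then fixing orientation conventions consistently enough that the correspondence ``type II edge $\leftrightarrow$ sign change'' and ``leaving/entering $\leftrightarrow$ descent/ascent'' comes out correctly rather than with signs swapped. One should also verify that the argument is uniform whether the two passages through $e_i$ belong to two different zigzags of $\tau$ or to a single zigzag meeting $e_i$ twice; in either case only the two induced directions on $e_i$ enter the dictionary, so the same reasoning applies.
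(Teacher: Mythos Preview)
Your argument is correct, but it takes a much more elaborate route than the paper does. The paper's proof is essentially one line: each zigzag in $\tau$, read as a cyclic sequence of directed edges, is a closed walk, so at any vertex $v$ the total number of zigzag-passages directed into $v$ equals the total number directed out of $v$; since every edge of type I at $v$ is traversed once in each direction it contributes equally to both sides, and therefore the edges of type II (each traversed twice in the same direction) must balance as well.

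Your approach instead performs a local structural analysis in the link of $v$: you show that $\tau$ assigns to each corner $F_j$ a well-defined sign $\sigma_j$, that the two passages through each incident edge $e_i$ use the two adjacent corners $F_{i-1},F_i$, and that $e_i$ is of type II exactly when $\sigma_{i-1}\ne\sigma_i$, with the direction (entering versus leaving) recorded by whether the change is an ascent or a descent. The lemma then reduces to the fact that a cyclic $\pm$-word has equally many ascents and descents. This is sound---your verifications of the non-intersection constraint and of the ``each corner is used once'' claim are the right ones---and it yields strictly more information than the paper's argument: you obtain a combinatorial encoding of the type I/II pattern around $v$ by a cyclic sign word, with type II edges sitting precisely at the sign changes. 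The price is that the bookkeeping you flag as ``the main obstacle'' really is the bulk of the work, whereas the paper's counting argument sidesteps all of it.
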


\begin{proof}
The number of times that the zigzags from $\tau$ enter a vertex  is equal to the number of times that these zigzags leave this vertex.
\end{proof}

\begin{prop}\label{prop-or}
For every face of $(\Gamma, \tau)$ one of the following possibilities is realized:
\begin{enumerate}
\item[{\rm (I)}] the face contains two edges of type {\rm I} and the third edge is of type {\rm II}, see {\rm Fig.2(a)};
\item[{\rm (II)}] all edges of the face are of type {\rm II} and form a directed cycle, see {\rm Fig.2(b)}.
\end{enumerate}
\end{prop}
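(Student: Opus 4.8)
The plan is to analyze a single face $F$ with edges $e_1, e_2, e_3$ by looking at how the zigzags of $\tau$ traverse $F$. Recall that for any two distinct edges $e, e'$ of $F$ there is a unique zigzag containing the consecutive pair $e, e'$; concretely, a zigzag entering $F$ along $e_i$ leaves along one of the other two edges, and since $F$ is a triangle each edge meets the other two, the ``zigzag rule'' (no sequence $e, e', e$ and the non-intersection condition) forces a definite turning pattern. So the six oriented passages through $F$ — each edge can be crossed by the zigzags of $\tau$ in two directions, ``into $F$'' and ``out of $F$'' — are paired up by the zigzags into three in-out transits. First I would set up coordinates: fix an orientation of the triangle $F$ (locally, on the disk it bounds) and record, for each $e_i$, whether the zigzag passage agrees with this local orientation or not. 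The type of $e_i$ (I or II) is then read off from whether the two passages through $e_i$ (coming from the two faces adjacent to $e_i$, or from one face twice) agree or disagree.

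The key combinatorial step is to count edges of type II on $F$. Each zigzag transit through $F$ enters along one edge and exits along another; relative to the chosen local orientation of $F$, label each edge by the direction ($+$ or $-$) in which the $\tau$-zigzags cross it as seen from $F$'s side. Because a transit enters one edge and leaves another, and by the structure of a triangle there are exactly three transits, a short case check shows the number of edges receiving a ``$-$'' label from the $F$-side is either one or three (it cannot be zero or two — this is where I would do the actual enumeration of the $3$ transit patterns, using that consecutive zigzag edges share a vertex and $e_i, e_{i+2}$ are disjoint, so each transit ``cuts off'' one vertex of $F$, and the three transits cut off the three vertices, once each). Whether an edge $e_i$ is of type II depends on comparing the $F$-side label with the label coming from the other face adjacent to $e_i$; but the crucial point is purely local to $F$: I claim an edge is of type II \emph{as seen consistently} exactly when the transit pattern assigns it the minority label. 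This reduces the proposition to the statement: among the three edges of $F$, either exactly one or exactly all three are ``II-type from the $F$-side,'' and in the latter case the induced directions form a directed cycle.

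For the directed-cycle claim in case (II): if all three edges are of type II, then $\tau$ assigns each a direction; I would argue that the transit pattern (each transit entering one edge and exiting another, hitting all three vertices) forces these three directions to be cyclically coherent — if two of them pointed ``the same way'' around $F$ in the sense of breaking the cycle, one of the vertices of $F$ would be entered twice or exited twice by the transits through $F$, contradicting the in-out pairing. This is essentially Lemma~\ref{lemma1} applied at the level of a single face. The main obstacle I anticipate is bookkeeping: cleanly formalizing ``direction as seen from a face'' versus the global direction on a type-II edge, and making sure the two faces adjacent to a type-II edge induce \emph{opposite} $F$-side labels (so that agreement globally corresponds to the minority pattern locally) — getting the sign conventions consistent is the delicate part, and I would handle it by fixing, once and for all, an identification of a neighborhood of $F$ with a standard oriented triangle and tracking everything there, then checking the gluing along each edge separately.
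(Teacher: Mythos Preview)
Your underlying plan --- look at the three zigzag transits through $F$ and case-check --- is exactly what the paper does, but the paper's execution is a four-line argument and yours is tangled in unnecessary and partly incorrect scaffolding. The paper simply says: assume WLOG that the zigzag containing the pair $e_1,e_2$ lies in $\tau$; then for each of the pairs $e_2,e_3$ and $e_3,e_1$ either the containing zigzag or its reverse lies in $\tau$, giving four cases, and one checks directly that each case yields (I) or (II). That is the entire proof.

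Two specific problems with your write-up. First, the distinction you draw between ``the $F$-side label'' and ``the label coming from the other face adjacent to $e_i$'' is spurious: both passages of $\tau$-zigzags through $e_i$ are transits of $F$ (and simultaneously transits of the other face), so the type of $e_i$ is already determined by what you see inside $F$ --- there is nothing to compare across faces, and the ``delicate part'' you anticipate does not exist. Second, your $+/-$ labeling of edges is not well-defined as stated: each edge is traversed \emph{twice} from the $F$-side (once in each of its two transits), so ``the direction in which the $\tau$-zigzags cross it'' is not a single sign. What is well-defined is a sign on each \emph{transit} (whether it goes clockwise or counterclockwise around the vertex it cuts off, relative to your local orientation); an edge is then of type~II iff its two transits carry the same sign, and the proposition follows immediately by looking at the $2^3$ sign patterns --- which is just the paper's four cases after normalizing one sign. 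Your claim that the number of ``$-$'' edges is one or three, and the ``minority label'' criterion, do not parse as written.
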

A face in a triangulation is said to be {\it of type} I or {\it of type} II if the corresponding possibility is realized.
\begin{center}
\begin{tikzpicture}[scale=0.6]

\draw[fill=black] (0,2) circle (3pt);
\draw[fill=black] (-1.7320508076,-1) circle (3pt);
\draw[fill=black] (1.7320508076,-1) circle (3pt);

\draw [thick, decoration={markings,
mark=at position 0.62 with {\arrow[scale=1.5,>=stealth]{>>}}},
postaction={decorate}] (5.1961524228,2) -- (3.4641016152,-1);

\draw [thick, decoration={markings,
mark=at position 0.62 with {\arrow[scale=1.5,>=stealth]{>>}}},
postaction={decorate}] (3.4641016152,-1) -- (6.9282032304,-1);

\draw [thick, decoration={markings,
mark=at position 0.62 with {\arrow[scale=1.5,>=stealth]{<<}}},
postaction={decorate}] (5.1961524228,2) -- (6.9282032304,-1);

\node at (0,-1.65) {(a)};

\draw[fill=black] (5.1961524228,2) circle (3pt);
\draw[fill=black] (3.4641016152,-1) circle (3pt);
\draw[fill=black] (6.9282032304,-1) circle (3pt);

\draw [thick, decoration={markings,
mark=at position 0.62 with {\arrow[scale=1.5,>=stealth]{><}}},
postaction={decorate}] (0,2) -- (-1.7320508076,-1);

\draw [thick, decoration={markings,
mark=at position 0.62 with {\arrow[scale=1.5,>=stealth]{>>}}},
postaction={decorate}] (-1.7320508076,-1) -- (1.7320508076,-1);

\draw [thick, decoration={markings,
mark=at position 0.62 with {\arrow[scale=1.5,>=stealth]{><}}},
postaction={decorate}] (0,2) -- (1.7320508076,-1);

\node at (5.1961524228,-1.65) {(b)};
\end{tikzpicture}
\captionof{figure}{ }
\end{center}

\begin{proof}[Proof of Proposition \ref{prop-or}]
Consider a face whose edges are denoted by $e_{1},e_{2},e_{3}$.
Without loss of generality we can assume that the zigzag containing the sequence $e_{1},e_{2}$ belongs to $\tau$.
Let $Z$ and $Z'$ be the zigzags containing the sequences $e_{2},e_{3}$ and $e_{3},e_{1}$, respectively. 
Then $Z\in \tau$ or $Z^{-1}\in \tau$ and $Z'\in \tau$ or $Z'^{-1}\in \tau$.
An easy verification shows that for each of these four cases we obtain (I) or (II).
\end{proof}

\begin{exmp}{\rm
If $n$ is odd, then the bipyramid $BP_n$ has the unique $z$-orientation (up to reversing), see Example 2(a).
The edges $ai$ and $bi$, $i\in \{1,\dots,n\}$ are of type I and the edges on the base of the bipyramid  are of type II.
The vertices $a,b$ are of type I and the vertices on the base are of type II. All faces are of type I.
The same happens for the case when $n=2k$ and $k$ is odd if the $z$-orientation is defined by 
the two zigzags presented in  Example 2(b); 
however, all faces are of type II  if we replace one of these zigzags by the reversed.
}\end{exmp}

\begin{exmp}{\rm
Suppose that $n=2k$ and $k$ is even.
Let $Z_{1},Z_{2},Z_{3},Z_{4}$ be the zigzags from Example 2(c).
For the $z$-orientation defined by these zigzags  all faces are of type I.
If the $z$-orientation is defined by $Z_{1},Z_{2}$ and $Z^{-1}_{3}, Z^{-1}_{4}$,
then all faces are of type II. 
In the case when the $z$-orientation is defined by $Z_{1},Z_{2},Z_{3}$ and $Z^{-1}_{4}$,
there exist faces of the both types.
}\end{exmp}

\begin{rem}{\rm
If we replace a $z$-orientation by the reversed $z$-orientation,
then the type of every edge does not change (but all edges of type II reverse the directions), consequently, 
the types of vertices and faces also do not change.
For $z$-knotted triangulations we can say about the types of edges, vertices and faces without attaching to a $z$-orientation \cite{PT1}.
}\end{rem}

A triangulation $\Gamma'$ of $M$ is a {\it shredding} of the triangulation $\Gamma$ if it is obtained from $\Gamma$ by triangulating some faces of $\Gamma$ such that all new vertices are contained in the interiors of these faces.

\begin{prop}\label{prop-sh}
Any $z$-oriented triangulation admits a $z$-oriented shredding with all faces of type I.
\end{prop}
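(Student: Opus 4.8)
The plan is to eliminate the faces of type~II one at a time, each time by a stellar subdivision of such a face (inserting one new vertex in its interior), and to check that this turns the subdivided face into three faces of type~I while changing nothing else.

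First I would fix a face $F$ of type~II, with vertices $x,y,z$; by Proposition~\ref{prop-or} its three edges are of type~II and form a directed cycle, say $x\to y\to z\to x$. Let $\Gamma'$ be obtained from $\Gamma$ by inserting a new vertex $v$ in the interior of $F$ and joining it to $x$, $y$, $z$, so that $F$ is replaced by the triangles $vxy$, $vyz$, $vzx$; then $\Gamma'$ is again a triangulation of $M$ (a $2$-cell embedding of a simple graph) and is a shredding of $\Gamma$. The core of the proof is to describe the zigzags of $\Gamma'$. Every zigzag of $\Gamma$ missing $F$ is unchanged. A zigzag passing through $F$ uses two consecutive edges of $F$ there, and in $\Gamma'$ that portion of the walk has to be rerouted through the three new triangles; applying the zigzag conditions (consecutive edges on a common face, consecutive faces distinct, $e_i$ and $e_{i+2}$ disjoint) inside $vxy$, $vyz$, $vzx$ shows that the strand entering $F$ across $xy$ now runs along $xy$, $vy$, $vz$, $zx$ and leaves across $zx$, and cyclically for the strands entering across $yz$ and across $zx$. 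Thus the three ``turns'' of $\tau$ at $F$ get reconnected into zigzags of $\Gamma'$; let $\tau'$ be the collection of zigzags obtained from $\tau$ by this reconnection.

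Next I would check that $\tau'$ is a $z$-orientation of $\Gamma'$: it clearly double covers the edges, and since the reconnection commutes with reversal of zigzags while the three reverse turns at $F$ do not belong to $\tau$, the collection $\tau'$ meets each reversal pair of zigzags of $\Gamma'$ in exactly one zigzag. The rerouting then gives the edge types for $(\Gamma',\tau')$ directly: each of $xy$, $yz$, $zx$ is still covered twice and in its former direction, hence stays of type~II with unchanged orientation, while each of $vx$, $vy$, $vz$ is covered once in each direction, hence is of type~I. So the three new faces $vxy$, $vyz$, $vzx$ each have two edges of type~I and one of type~II, i.e.\ are of type~I; and since every edge not incident to $v$ keeps its type, every old face except $F$ keeps its type as well.

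Finally, because the move at $F$ changes the type of no edge outside $F$ and leaves the edges of $F$ of type~II (so adjacent type~II faces are not destroyed), distinct type~II faces can be subdivided independently; performing this move at each type~II face of $(\Gamma,\tau)$ produces a $z$-oriented shredding of $(\Gamma,\tau)$ all of whose faces are of type~I. The step I expect to be the main obstacle is the description of the zigzags of $\Gamma'$ — carefully tracking how the stellar subdivision reconnects the zigzags through $F$ and confirming that the resulting double cover is minimal; granted that, the edge-type computation and the iteration are routine.
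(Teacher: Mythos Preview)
Your proposal is correct and follows essentially the same approach as the paper: both proofs eliminate type~II faces one at a time by stellar subdivision, track explicitly how the three $\tau$-passes through $F$ are rerouted through the new vertex (your $xy,vy,vz,zx$ is exactly the paper's $e_i,e'_{\sigma^{-1}(i)},e'_i,e_{\sigma^{-1}(i)}$), observe that the external segments and hence the types of all old edges are preserved while the three new edges are of type~I, and then iterate. Your treatment of why $\tau'$ is again a $z$-orientation is slightly more explicit than the paper's, but the argument is the same in substance.
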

\begin{proof}
Let $F$ be a face of type II in a $z$-oriented triangulation $(\Gamma,\tau)$ and let $e_1, e_2, e_3$ be edges of $F$. 
Suppose that the edges of $F$ are oriented as in Fig. 3 and denote by $\sigma$ the permutation $(1,2,3)$.
\begin{center}
\begin{tikzpicture}[scale=1.4]

\begin{scope}[xshift=-3cm]
\draw[fill=black] (0,0) circle (1.5pt);
\draw[fill=black] (-60:2cm) circle (1.5pt);
\draw[fill=black] (-120:2cm) circle (1.5pt);
\draw[thick, line width=2pt] (-60:2cm) -- (-120:2cm);

\draw[thick, line width=2pt] (-60:2cm) -- (0,0) -- (-120:2cm);

\draw [thick, decoration={markings,
mark=at position 0.51 with {\arrow[scale=2,>=stealth]{>}},
mark=at position 0.59 with {\arrow[scale=2,>=stealth]{>}}},
postaction={decorate}] (-60:2cm) -- (0,0);

\draw [thick, decoration={markings,
mark=at position 0.51 with {\arrow[scale=2,>=stealth]{>}},
mark=at position 0.59 with {\arrow[scale=2,>=stealth]{>}}},
postaction={decorate}] (0,0) -- (-120:2cm);

\draw [thick, decoration={markings,
mark=at position 0.51 with {\arrow[scale=2,>=stealth]{>}},
mark=at position 0.59 with {\arrow[scale=2,>=stealth]{>}}},
postaction={decorate}] (-120:2cm) -- (-60:2cm);

\node at (-135:1cm) {$e_3$};
\node at (-47:1cm) {$e_2$};
\node at (0,-1.95cm) {$e_1$};

\end{scope}

\draw [->]  (-2cm,-0.75) -- (-1cm,-0.75);

\draw[fill=black] (0,0) circle (1.5pt);
\draw[fill=black] (-60:2cm) circle (1.5pt);
\draw[fill=black] (-120:2cm) circle (1.5pt);
\draw[thick, line width=2pt] (-60:2cm) -- (-120:2cm);

\draw[thick, line width=2pt] (-60:2cm) -- (0,0) -- (-120:2cm);

\draw [thick, decoration={markings,
mark=at position 0.51 with {\arrow[scale=2,>=stealth]{>}},
mark=at position 0.59 with {\arrow[scale=2,>=stealth]{>}}},
postaction={decorate}] (-60:2cm) -- (0,0);

\draw [thick, decoration={markings,
mark=at position 0.51 with {\arrow[scale=2,>=stealth]{>}},
mark=at position 0.59 with {\arrow[scale=2,>=stealth]{>}}},
postaction={decorate}] (0,0) -- (-120:2cm);

\draw [thick, decoration={markings,
mark=at position 0.51 with {\arrow[scale=2,>=stealth]{>}},
mark=at position 0.59 with {\arrow[scale=2,>=stealth]{>}}},
postaction={decorate}] (-120:2cm) -- (-60:2cm);

\draw[thick, line width=1pt, dashed] (-90:1.1547cm) -- (0,0);

\draw [thick, line width=1pt, dashed, decoration={markings,
mark=at position 0.45 with {\arrow[scale=2]{<}},
mark=at position 0.6 with {\arrow[scale=2]{>}}},
postaction={decorate}] (-90:1.1547cm) -- (-120:2cm);

\draw [thick, line width=1pt, dashed, decoration={markings,
mark=at position 0.45 with {\arrow[scale=2]{<}},
mark=at position 0.6 with {\arrow[scale=2]{>}}},
postaction={decorate}] (-90:1.1547cm) -- (-60:2cm);

\draw [thick, line width=1pt, dashed, decoration={markings,
mark=at position 0.45 with {\arrow[scale=2]{<}},
mark=at position 0.6 with {\arrow[scale=2]{>}}},
postaction={decorate}] (-90:1.1547cm) -- (0,0);

\draw[fill=white] (-90:1.1547cm) circle (1.5pt);

\node at (-135:1cm) {$e_3$};
\node at (-47:1cm) {$e_2$};
\node at (0,-1.95cm) {$e_1$};

\node at (-75:1.17cm) {$e'_3$};
\node at (-105:1.17cm) {$e'_2$};
\node at (-81:0.9cm) {$e'_1$};

\end{tikzpicture}
\captionof{figure}{ }
\end{center}
Zigzags from $\tau$ passes through $F$ precisely three times, so the face $F$ separates them into $3$ segments of type
$$e_{\sigma^{-1}(i)},e_{i},X_{ij},e_j,e_{\sigma(j)},$$ 
where $i,j\in\{1,2,3\}$ and the sequence $X_{ij}$ is a maximal part of a zigzag formed by edges occuring between $e_i$ and $e_j$. 
Let $\mathcal{X}$ be the set of all such sequences $X_{ij}$ for $F$ and the $z$-orientation $\tau$. 
Note that every $X_{ij}\in\mathcal{X}$ is completely determined by the beginning edge $e_i$ and the final edge $e_j$.
Now, we triangulate the face $F$ by adding a vertex in the interior of $F$ and three edges  connecting this vertex with the vertices of $F$.
We denote this new triangulation by $\Gamma'$ and write $e'_i$ for the new edge if it does not has a common vertex with $e_i$ (see Fig. 3). 
Observe that for any $i\in\{1,2,3\}$ there exists a zigzag in $\Gamma'$ containing a subsequence of the form
$$e_i, e'_{\sigma^{-1}(i)},e'_i,e_{\sigma^{-1}(i)},X_{\sigma^{-1}(i)j}$$
for certain $j\in\{1,2,3\}$ and $X_{\sigma^{-1}(i)j}\in\mathcal{X}$. 
The edge $e_j$ which occurs in the zigzag directly after this subsequence is the same as the edge after $X_{\sigma^{-1}(i)j}$ in $(\Gamma,\tau)$, since $X_{\sigma^{-1}(i)j}$ does not contain edges of $F$.
Therefore, zigzags of $\Gamma'$ related to the three faces not contained in $\Gamma$ pass through the edges coming from $\Gamma$ in the same way as zigzags from $\tau$.
This implies the existence of a $z$-orientation of $\Gamma'$ such that all edges from $\Gamma$ do not change their types and the three new faces of $\Gamma'$ contained in $F$ are of type I. 
Recursively, we eliminate all faces of type II from $(\Gamma,\tau)$ and come to a $z$-oriented shredding of $\Gamma$ with all faces of type I and such that the type of any edge from $(\Gamma,\tau)$ is preserved.
\end{proof}

\section{Homogeneous zigzags in triangulations with faces of type I}
In this section, we will always suppose that $\Gamma$ is a triangulation with fixed $z$-orientation $\tau$ such that all faces in $\Gamma$ are of type I,
i.e. each face contains precisely two edges of type I and the third edge is of type II. 
If $m$ is the number of faces, then there are precisely $m$ edges of type I and $m/2$ edges of type II.
In other words, the number of edges of type I is the twofold number of edges of type II.
We say that a zigzag of $\Gamma$ is {\it homogeneous} if it is a cyclic sequence $\{e_{i},e'_{i},e''_{i}\}^{n}_{i=1}$,
where each $e_{i}$ is an edge of type II and all $e'_{i},e''_{i}$ are edges of type I.
If a zigzag is homogeneous, then the reversed zigzag also is homogeneous.  
Denote by $\Gamma_{II}$ the subgraph of $\Gamma$ formed by all vertices and edges of type II.

\begin{exmp}{\rm
The zigzags of $\Gamma=BP_n$ are homogeneous if $n$ is odd (the $z$-knotted case) or
$n$ is even and the $z$-orientation is defined by the two zigzags from Example 2(b) or by the four zigzags from Example 2(c). 
Only $a$ and $b$ are vertices of type I and $\Gamma_{II}$ is the directed cycle formed by the edges of the base of the bipyramid. 
Conversely, if all zigzags of $\Gamma$ are homogeneous and there are precisely two vertices of type I,
then $\Gamma$ is a bipyramid (easy verification). 
}\end{exmp}

\begin{exmp}\label{ex_6}{\rm
Let $\Gamma'$ be a triangulation of $M$ with a $z$-orientation such that all faces are of type II (see \cite[Example 4]{PT3} for a $z$-knotted triangulation of ${\mathbb S}^2$ whose faces are of type II). 
As in the proof of Proposition \ref{prop-sh}, we consider the shredding $\Gamma''$ of $\Gamma'$ which is obtained by adding a vertex in the interior of each face and three edges connecting this vertex with the vertices of the face. 
This triangulation $\Gamma''$ admits a $z$-orientation such that all faces are of type I. 
Every zigzag $e_{1},e_{2},e_{3},\dots$ in $\Gamma'$ is extended to a zigzag
$$e_{1},e'_{1},e''_{1},e_{2},e'_{2},e''_{2},e_{3},\dots$$
in $\Gamma''$ which passes through edges of $\Gamma'$ in the opposite directions. 
All $e_i$ are of type II and all $e_i'$ are of type I. 
So, all zigzags in $\Gamma''$ are homogeneous.
}\end{exmp}

\begin{theorem}\label{newth1}
The following three conditions are equivalent:
\begin{enumerate}
\item[{\rm (1)}] All zigzags of $\Gamma$ are homogeneous.
\item[{\rm (2)}] $\Gamma_{II}$ is a closed 2-cell embedding of a simple Eulerian digraph such that every face is a directed cycle.
\item[{\rm (3)}] Each connected component of $M\setminus\Gamma_{II}$ is homeomorphic to an open 2-dimensional disk.
\end{enumerate}
\end{theorem}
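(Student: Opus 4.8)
The plan is to prove the cycle of implications $(1)\Rightarrow(2)\Rightarrow(3)\Rightarrow(1)$, using the structural fact (stated in the introduction) that the connected components of $M\setminus\Gamma_{II}$ are open disks, cylinders, or Möbius strips.

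\emph{Step 1: $(1)\Rightarrow(2)$.} Assume every zigzag of $\Gamma$ is homogeneous. First I would check that $\Gamma_{II}$ is simple: a loop or a multiple edge of type II would force two faces of $\Gamma$ sharing two edges, contradicting that $\Gamma$ is a triangulation of a surface (distinct faces meet in an edge, a vertex, or the empty set). For the Eulerian-digraph property, note that every vertex of $\Gamma_{II}$ is a vertex of type II, and Lemma \ref{lemma1} says in-degree equals out-degree there; connectivity of $\Gamma_{II}$ I would extract from homogeneity, since a single zigzag, read cyclically as $\{e_i,e'_i,e''_i\}$, has its type-II edges $e_i$ linked along a walk in $\Gamma_{II}$, and the collection of zigzags covering all edges then forces $\Gamma_{II}$ to be connected (or I argue it component by component and observe a zigzag cannot jump between components). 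The heart of this step is showing each face of the embedding $\Gamma_{II}\hookrightarrow M$ is an open disk bounded by a directed cycle: I would walk along the boundary of a face of $\Gamma_{II}$ inside $M\setminus\Gamma_{II}$ and use that, between two consecutive type-II edges, the zigzag inserts exactly the two type-I edges $e'_i,e''_i$ of a single type-I face of $\Gamma$; tracing how these type-I faces are glued around a type-II edge shows the complementary region is triangulated by a "fan" and its boundary walk in $\Gamma_{II}$ closes up into a directed cycle, ruling out the cylinder and Möbius possibilities.

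\emph{Step 2: $(2)\Rightarrow(3)$.} This is essentially the definition: a closed $2$-cell embedding is exactly one in which every face is homeomorphic to an open disk, so if $\Gamma_{II}$ is such an embedding then each component of $M\setminus\Gamma_{II}$, being a union of such faces glued along edges and vertices of $\Gamma_{II}$... — actually each \emph{face} is already a component here since $\Gamma_{II}$ need not be connected a priori, so I would first reconcile this with the phrasing "connected component of $M\setminus\Gamma_{II}$"; if $\Gamma_{II}$ is connected each complementary component is a single open face, hence a disk. If $\Gamma_{II}$ is disconnected a complementary component could a priori be larger, but the closed $2$-cell hypothesis together with the surface structure pins it down, so I would include connectivity of $\Gamma_{II}$ as part of condition (2)'s content (it is implicit in "Eulerian digraph"/"embedding" for the intended reading) — this bookkeeping is the only subtlety here.

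\emph{Step 3: $(3)\Rightarrow(1)$.} Assume each component of $M\setminus\Gamma_{II}$ is an open disk. Take any zigzag $Z$ and any edge $e$ of type II in $Z$ (every zigzag contains one, since $\Gamma$ has no face of type II and a zigzag confined to type-I edges is impossible by a parity/counting argument on the faces of type I). I must show the next two edges of $Z$ after $e$ are both of type I and the third is again of type II. Locally at $e$, both faces of $\Gamma$ containing $e$ are of type I (Proposition \ref{prop-or}), so the two edges of $Z$ immediately following $e$ on those faces are of type I; the issue is the edge after those two. Here I use that the type-I faces incident to a given complementary disk $D$ form a triangulated polygon whose boundary is a walk in $\Gamma_{II}$: once $Z$ enters $D$ through $e$ it runs along type-I edges "spanning" $D$ and must exit through another type-II edge on $\partial D$, and because $D$ is a disk the next type-II edge is reached after exactly the two type-I edges of one type-I triangle. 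The disk hypothesis is exactly what prevents $Z$ from wandering through several type-I edges before hitting type II (which is what happens over a cylinder or Möbius component).

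\emph{Main obstacle.} The crux is the local-to-global analysis of how the type-I faces of $\Gamma$ fill a complementary region of $\Gamma_{II}$ and the precise correspondence "two type-I edges of a zigzag $\leftrightarrow$ one type-I triangle straddling the boundary of a disk component." Getting this combinatorial picture exactly right — in particular carefully handling the non-orientable case and the possibility of a complementary region touching the same type-II edge from both sides — is where the real work lies; once that picture is established, all three implications follow by tracking directions along zigzags and invoking Lemma \ref{lemma1} and the disk/cylinder/Möbius trichotomy.
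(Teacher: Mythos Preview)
Your overall architecture matches the paper's: a cycle of implications $(1)\Rightarrow(2)\Rightarrow(3)\Rightarrow(1)$, with $(2)\Rightarrow(3)$ immediate and the structural trichotomy (disk/cylinder/M\"obius) from Theorem~\ref{theorem2} doing the heavy lifting. Your $(1)\Rightarrow(2)$ sketch is close to the paper's Proposition~\ref{theorem1}(I), which organizes the argument around the cycle $C(v)$ of edges opposite a vertex $v$: homogeneity forces $C(v)$ to be a directed cycle of type-II edges whenever $v$ is of type~I, and each face of $\Gamma_{II}$ is exactly one such $C(v)$.

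Where you diverge is in $(3)\Rightarrow(1)$. The paper does \emph{not} verify homogeneity edge-by-edge as you propose. Instead it argues that once every component of $M\setminus\Gamma_{II}$ is a disk, $\Gamma_{II}$ is a closed $2$-cell embedding with directed-cycle faces and $\Gamma={\rm T}(\Gamma_{II})$; it then invokes the \emph{converse} part of Proposition~\ref{theorem1}, which constructs explicitly the (unique) homogeneous $z$-orientation on ${\rm T}(\Gamma')$ for any such $\Gamma'$. So the paper's route is really $(3)\Rightarrow(2')\Rightarrow(1)$ via the ${\rm T}(\cdot)$ machinery. Your direct route is viable in principle, but note a slip: after a type-II edge $e$, the zigzag enters \emph{one} face $F_1$ containing $e$, picks up a type-I edge $e'$, and then moves to the face $F_2\neq F_1$ containing $e'$ --- which need not contain $e$ at all. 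So the sentence ``both faces of $\Gamma$ containing $e$ are of type~I, so the two edges of $Z$ immediately following $e$ on those faces are of type~I'' misidentifies where $e''$ lives. What actually forces $e''$ to be type~I is the fan structure of the disk (Theorem~\ref{theorem2}(2)): all edges at the central type-I vertex are type~I, and the zigzag condition $e'\cap e'''=\emptyset$ then pins $e'''$ to the boundary type-II edge. Your later sentence about the ``triangulated polygon'' is the right picture; just anchor it to the unique central type-I vertex rather than to the two faces through $e$.
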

The implication (2) $\Rightarrow$ (3) is obvious. 
The implications (1) $\Rightarrow$ (2) and (3) $\Rightarrow$ (1) will be proved in Section 4 and Section 5, respectively. 

\section{Proof of the implication (1) $\Rightarrow$ (2) in Theorem 1}

Now, we generalize the construction described in Proposition \ref{prop-sh} and Example \ref{ex_6}.
Let $\Gamma'$ be a closed $2$-cell embedding of a connected finite simple graph in the surface $M$.
Then all faces of $\Gamma'$ are homeomorphic to a closed $2$-dimensional disk. 
For each face $F$ we take a point $v_{F}$ belonging to the interior of $F$.
We add all $v_{F}$ to the vertex set of $\Gamma'$ and connect each $v_{F}$ with every vertex of $F$ by an edge.
We obtain a triangulation of $M$ which will be denoted by ${\rm T}(\Gamma')$.

The assumption that our $2$-cell embedding is closed cannot be omitted.
Indeed, if a certain face of $\Gamma'$ is not homeomorphic to a closed $2$-dimensional disk, 
then there is a pair of vertices connected by a double edge and ${\rm T}(\Gamma')$ is not a triangulation in our sense.

\begin{prop}\label{theorem1}
If all zigzags of $\Gamma$ are homogeneous, 
then $\Gamma_{II}$ is a closed $2$-cell embedding of a simple Eulerian digraph 
such that every face is a directed cycle and $\Gamma={\rm T}(\Gamma_{II})$.
Conversely, if $\Gamma'$ is a closed $2$-cell embedding of a simple Eulerian digraph and every face is a directed cycle,
then the triangulation ${\rm T}(\Gamma')$ admits a unique $z$-orien\-tation 
such that all zigzags of ${\rm T}(\Gamma')$ are homogeneous
and $\Gamma'$ is a subgraph of ${\rm T}(\Gamma')$ formed by all vertices and edges of type II. 
\end{prop}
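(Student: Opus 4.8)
The plan is to prove the two directions separately, with the forward direction resting on a careful analysis of how a homogeneous zigzag interacts with the edges of type I around a vertex of type II, and the backward direction being an explicit verification that the canonical $z$-orientation of ${\rm T}(\Gamma')$ works.

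\emph{Forward direction.} Assume all zigzags of $\Gamma$ are homogeneous. First I would show $\Gamma = {\rm T}(\Gamma_{II})$, which simultaneously forces $\Gamma_{II}$ to be a closed $2$-cell embedding with the stated properties. Fix a vertex $v$ of type I. By definition $v$ lies only on edges of type I, so every face at $v$ has its type-II edge opposite to $v$; the link of $v$ is a cycle $u_1 u_2 \cdots u_k u_1$ where each $u_i u_{i+1}$ is the type-II edge of the face $v u_i u_{i+1}$. The key step is to argue, using the homogeneity of the zigzag through the face $v u_i u_{i+1}$, that each $u_i$ is a vertex of type II and that these type-II edges $u_i u_{i+1}$, read around the link, form a \emph{directed} cycle. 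Indeed, homogeneity says each type-II edge is immediately preceded and followed in its zigzag by exactly one type-I edge; tracing the zigzag that enters $v u_i u_{i+1}$ via the edge $v u_i$ shows it exits via $v u_{i+1}$ into the next face at $v$, and the orientation of $u_i u_{i+1}$ induced by $\tau$ is then consistent all the way around. Thus the faces of $\Gamma$ at $v$ are exactly the triangles obtained by coning the directed cycle $u_1\cdots u_k$ from $v$. This identifies $v$ as one of the cone points $v_F$ and the cycle as the boundary of a face $F$ of $\Gamma_{II}$, so $\Gamma = {\rm T}(\Gamma_{II})$. That $\Gamma_{II}$ is simple follows because ${\rm T}$ of a non-simple or non-closed-$2$-cell embedding is not a triangulation in our sense (as remarked just before the proposition), while $\Gamma$ \emph{is} one. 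Lemma \ref{lemma1} gives that $\Gamma_{II}$, with the orientation from $\tau$, is Eulerian at every type-II vertex; the argument above shows every face of $\Gamma_{II}$ is a directed cycle, and connectedness of $\Gamma_{II}$ follows from connectedness of $M$ together with the fact that each component of $M\setminus\Gamma_{II}$ is an open disk (by the link analysis, it is the interior of a face). Uniqueness of the $z$-orientation making the zigzags homogeneous comes at the end: reversing any single zigzag of a homogeneous family breaks homogeneity of some face unless all are reversed simultaneously (this is where one checks that no proper, nonempty subset of the zigzags can be reversed).

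\emph{Backward direction.} Given $\Gamma'$ a closed $2$-cell embedding of a simple Eulerian digraph with every face a directed cycle, form $\Gamma = {\rm T}(\Gamma')$. I would define a $z$-orientation directly: for each face $F$ of $\Gamma'$ with directed boundary cycle $u_1 \to u_2 \to \cdots \to u_k \to u_1$ and cone point $v_F$, declare the edges $u_i u_{i+1}$ to be of type II with their given direction and the spoke edges $v_F u_i$ to be of type I. One then writes down the zigzags of $\Gamma$ explicitly: starting along a spoke $v_F u_i$ into the triangle $v_F u_i u_{i+1}$, the left-right rule sends the zigzag across $u_i u_{i+1}$, and — because that edge is the type-II edge shared with the coned triangle of the face $F'$ on its other side — into a spoke of $F'$, then out again across the next directed edge, and so on; each such zigzag is visibly of the form $\{e_i, e_i', e_i''\}$ with $e_i$ of type II, hence homogeneous. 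A short computation with Proposition \ref{prop-or} confirms these local type assignments are globally consistent (every face of $\Gamma$ is of type I with exactly the two spokes of type I), so what we wrote down is a genuine $z$-orientation realizing the claimed types, and by construction $\Gamma'$ is the subgraph of type-II vertices and edges. Uniqueness is as above.

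\emph{Main obstacle.} The delicate point is the link analysis in the forward direction: showing that homogeneity of zigzags forces the type-II edges around a type-I vertex to close up into a single \emph{directed} cycle (rather than several cycles, or an undirected one). This requires tracking how a zigzag threads through consecutive faces sharing the vertex $v$ and using the "exactly two type-I edges after each type-II edge" condition at just the right moment; once this is pinned down, everything else — simplicity, the Eulerian property, $\Gamma = {\rm T}(\Gamma_{II})$, and the disk components — follows with little extra work. The uniqueness of the homogeneous $z$-orientation is a second, smaller obstacle, handled by a parity/consistency argument on face types under reversal of individual zigzags.
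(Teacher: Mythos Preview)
Your overall strategy matches the paper's: analyse the cycle $C(v)$ of type-II edges opposite a vertex $v$, show it is directed, and identify $v$ as a cone point. The backward direction is likewise essentially what the paper does.

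There is, however, a genuine gap in your forward direction. You begin with ``Fix a vertex $v$ of type I'' and run the link analysis from there, but you never establish that such a vertex exists, nor that \emph{every} face of $\Gamma$ lies in the star of some type-I vertex. Both facts are needed to conclude $\Gamma = {\rm T}(\Gamma_{II})$; without them your argument only says that \emph{if} a type-I vertex exists, its link looks right. A priori it is not obvious that the vertex opposite the type-II edge of a given face is of type~I: that vertex sits on two type-I edges of that face, but could still be incident to a type-II edge of some \emph{other} face. The paper avoids this by reversing the logic: it starts from an arbitrary edge $e_1$ of type~II (these certainly exist), takes the vertex $v$ opposite $e_1$ in one of the two faces containing $e_1$, and then uses homogeneity to march around $C(v)$, proving simultaneously that every edge through $v$ is of type~I (so $v$ is a type-I vertex) and that $C(v)$ is a directed cycle of type-II edges. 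Since every face contains a type-II edge, this shows directly that every face lies in the star of a type-I vertex, giving both existence and coverage in one stroke. Your zigzag-tracing is the same local computation; you just need to launch it from the type-II edge rather than from a presumed type-I vertex.

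A smaller point: your sentence ``homogeneity says each type-II edge is immediately preceded and followed in its zigzag by exactly one type-I edge'' misstates the condition (it is two type-I edges on each side), and the zigzag you trace through $vu_i, vu_{i+1}$ does not itself pass through $u_iu_{i+1}$, so the claim that its orientation is ``then consistent'' needs the auxiliary zigzag through $vu_i$ in the opposite direction --- exactly the extra step the paper makes explicit.
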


\begin{proof}
(I). 
Let $v$ be a vertex of $\Gamma$. 
Consider all faces containing $v$ and take the edge on each of these faces which does not contain $v$.
All such edges form a cycle which will be denoted by $C(v)$.

Suppose that all zigzags of $\Gamma$ are homogeneous and
consider any edge $e_{1}$ of type II. 
Let $v_{1}$ and $v_{2}$ be the vertices of this edge such that $e_{1}$ is directed from $v_{1}$ to $v_{2}$.
We choose one of the  two faces containing $e_1$ and take in this face the vertex $v$ which does not belong to $e_1$.
Let $e'_{1}$ and $e''_{1}$ be the edges which contain $v$ and occur in a certain zigzag $Z\in \tau$ immediately after $e_{1}$, see Fig.4.
Denote by $e_2$ the third edge of the face containing $e'_{1}$ and $e''_{1}$.
This edge contains $v_{2}$ and another one vertex, say $v_{3}$.
Since $Z$ is homogeneous, the edges $e'_{1}$ and $e''_{1}$ are of type I, and consequently, $e_2$ is of type II.
The zigzag  which goes through $e'_1$ from $v$ to $v_{2}$ belongs to $\tau$
(this follows easily from the fact that $Z$ goes through $e'_{1}$ in the opposite direction and $e'_1$ is an edge of type I).
The latter guarantees that
the edge $e_{2}$ is directed from $v_{2}$ to $v_{3}$.
By our assumption, the edge $e_{3}$ which occurs in $Z$ immediately after  $e'_{1}$ and $e''_{1}$ is of type II.
This edge is directed from $v_{3}$ to a certain vertex $v_{4}$.
So, $e_{1},e_{2},e_{3}$ are consecutive edges of the cycle $C(v)$ and each $e_i$ is directed from $v_i$ to $v_{i+1}$.
Consider the zigzag from $\tau$ which contains the sequence $e_2, e''_1$. 
The next edge in this zigzag connects $v$ and $v_{4}$ (the zigzag goes from $v$ to $v_{4}$).
Let $e_{4}$ be the edge which occurs in the zigzag after it. 
Then $e_{4}$ is an edge of type II (by our assumption), it belongs to $C(v)$ and leaves $v_{4}$.
Recursively, we establish that $C(v)$ is a directed cycle formed by edges of type II and every edge containing $v$ is of type I,
i.e. $v$ is a vertex of type I. 
Now, we consider the other face containing $e_1$ and take the vertex $v'$ of this face which does not belong to $e_{1}$. 
Using the same arguments, we establish that $v'$ is a vertex of type I and $C(v')$ is a directed  cycle formed by edges of type II. 
\begin{center}
\begin{tikzpicture}[scale=0.85, xshift=0.25cm]
\draw[fill=black] (0,0) circle (1.5pt);
\draw[fill=black] (0:2cm) circle (1.5pt);
\draw[fill=black] (-45:2cm) circle (1.5pt);
\draw[fill=black] (-90:2cm) circle (1.5pt);
\draw[fill=black] (-135:2cm) circle (1.5pt);
\draw[fill=black] (-180:2cm) circle (1.5pt);

\draw[thick, line width=2pt] (0:2cm) -- (-45:2cm) -- (-90:2cm) -- (-135:2cm) -- (-180:2cm);

\draw [thick, decoration={markings,
mark=at position 0.49 with {\arrow[scale=2,>=stealth]{>}},
mark=at position 0.61 with {\arrow[scale=2,>=stealth]{>}}},
postaction={decorate}] (-180:2cm) -- (-135:2cm);

\draw [thick, decoration={markings,
mark=at position 0.49 with {\arrow[scale=2,>=stealth]{>}},
mark=at position 0.61 with {\arrow[scale=2,>=stealth]{>}}},
postaction={decorate}] (-135:2cm) -- (-90:2cm);

\draw [thick, decoration={markings,
mark=at position 0.49 with {\arrow[scale=2,>=stealth]{>}},
mark=at position 0.61 with {\arrow[scale=2,>=stealth]{>}}},
postaction={decorate}] (-90:2cm) -- (-45:2cm);

\draw [thick, decoration={markings,
mark=at position 0.49 with {\arrow[scale=2,>=stealth]{>}},
mark=at position 0.61 with {\arrow[scale=2,>=stealth]{>}}},
postaction={decorate}] (-45:2cm) -- (0:2cm);

\draw[thick, line width=1pt, dashed] (-180:2cm) -- (0,0) -- (0:2cm);
\draw[thick, line width=1pt, dashed] (-45:2cm) -- (0,0);
\draw[thick, line width=1pt, dashed] (-90:2cm) -- (0,0);
\draw[thick, line width=1pt, dashed] (-135:2cm) -- (0,0);

\node at (0.15cm,0.15cm) {$v$};

\node at (-45:2.3cm) {$v_4$};
\node at (-90:2.3cm) {$v_3$};
\node at (-135:2.3cm) {$v_2$};
\node at (-180:2.3cm) {$v_1$};

\node at (-157.5:2.17cm) {$e_1$};
\node at (-112.5:2.17cm) {$e_2$};
\node at (-67.5:2.17cm) {$e_3$};
\node at (-22.5:2.17cm) {$e_4$};

\node at (-148:1.1cm) {$e'_1$};
\node at (-102:1.1cm) {$e''_1$};

\node[color=white] at (0:2.45cm) {s};

\end{tikzpicture}
\captionof{figure}{ }
\end{center}
For every vertex $v$ of type I we can take a face containing $v$ and the edge of this face which does not contain $v$.
This edge is of type II (since the remaining two edges of the face are of type I).
The above arguments show that the following assertions are fulfilled:
\begin{enumerate}
\item[(1)] vertices of type {\rm I} exist and for every such vertex $v$ the cycle $C(v)$ is a directed cycle formed by edges of type {\rm II};
\item[(2)] for every edge of type {\rm II} there are precisely two vertices $v$ and $v'$ of type {\rm I} such that 
this edge is contained in the cycles $C(v)$ and $C(v')$.
\end{enumerate}
Similarly, for every edge $e$ of type I we take a face containing $e$;
this face contains an edge of type II which implies that $e$ connects a vertices of different types.  

Consider $\Gamma_{II}$. 
Observe that any two vertices of type II in $\Gamma$ can be connected by a path formed by edges of type II
which means that $\Gamma_{II}$ is connected.
It is easy to see that $\Gamma_{II}$ is a $2$-cell embedding of a simple digraph 
such that every face is the directed cycle $C(v)$ for a certain vertex $v$ of type I;
in particular, this $2$-cell embedding is closed. 
Lemma 1 implies that $\Gamma_{II}$ is an Eulerian digraph. 
The equality $\Gamma={\rm T}(\Gamma_{II})$ is obvious.

The following remark will be used to prove the second part of the theorem.
The conditions (1) and (2) guarantee that every zigzag of $\Gamma$ containing an edge of type II is homogeneous.
Recall that the number of edges of type I is the twofold number of edges of type II.
This implies that there is no zigzag containing edges of type I only
(since every edge occurs twice in a unique zigzag from $\tau$ or it occurs ones in precisely two distinct zigzags from $\tau$).
Therefore, every zigzag of $\Gamma$ is homogeneous if (1) and (2)  hold.
 
 (II). Suppose that $\Gamma'$ is a closed $2$-cell embedding of a simple Eulerian digraph such that every face is a directed cycle.
 
Let $e_1,\dots,e_n$ be the directed cycle formed by all edges of a certain face of $\Gamma'$.
For every $i\in \{1,\dots,n\}$ we define $j(i)=i+2({\rm mod}\, n)$ and
denote by $e'_i$ and $e''_{i}$ the edges containing the vertex $v_{F}$ and intersecting $e_{i}$ and $e_{j(i)}$, respectively.
Consider the zigzag of ${\rm T}(\Gamma')$ which contains the sequence $e_{i},e'_{i},e''_{i}, e_{j(i)}$. 
It passes through $e_{i}$ and $e_{j(i)}$ according to the directions of these edges;
and the same holds for every edge of $\Gamma'$ which occurs in this zigzag.
Such a zigzag exists for any pair formed by a face of $\Gamma'$ and an edge on this face. 
The collection of all such zigzags is a $z$-orientation of ${\rm T}(\Gamma')$ with the following properties: 
all edges of $\Gamma'$ are of type II and every $v_F$ is a vertex of type I. 
This implies that ${\rm T}(\Gamma')$ satisfies the conditions (1) and (2) which gives the claim.
\end{proof}
Note that the second part of Proposition \ref{theorem1} will be used to prove the implication (3) $\Rightarrow$ (1).

\section{Structure of triangulations with faces of type I}
In this section, we describe some structural properties of $z$-oriented triangulations with faces of type I. 
As immediately consequence we obtain the implication (3) $\Rightarrow$ (1).

As above, we suppose that $(\Gamma, \tau)$ is a $z$-oriented triangulation of $M$, where all faces are of type I. 
As above, we denote by $\Gamma_{II}$ the subgraph of $\Gamma$ consisting of all vertices and all edges of type II.
From the previous section it follows that if zigzags of $(\Gamma, \tau)$ are homogeneous, then connected components of $M\setminus{\Gamma_{II}}$ are homeomorphic to an open $2$-dimensional disk. 
Now, we describe the general case.

\begin{theorem}\label{theorem2} The following assertions are fullfiled:
\begin{enumerate}
\item[{\rm (1)}] Connected components of $M\setminus{\Gamma_{II}}$ are homeomorphic to an open $2$-dimensio\-nal disk, an open M\"obius strip or an open cylinder.
\item[{\rm (2)}] A connected component of $M\setminus{\Gamma_{II}}$ contains a vertex of type I if and only if it is an open $2$-dimensional disk; such a vertex of type I is unique.
\end{enumerate}
\end{theorem}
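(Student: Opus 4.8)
\section*{Proof proposal for Theorem \ref{theorem2}}

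\textbf{Setup and local picture.} The plan is to study $M\setminus\Gamma_{II}$ by cutting $M$ along $\Gamma_{II}$ and understanding the boundary structure that the surgery produces. For every edge $e$ of type I, we showed in the previous section that $e$ joins a vertex of type I to a vertex of type II (and every face of $\Gamma$ has exactly two edges of type I and one of type II). Fix a connected component $U$ of $M\setminus\Gamma_{II}$. The closure $\overline{U}$ is obtained from $U$ by adding the part of $\Gamma_{II}$ that bounds it, and $U$ itself is tiled by the open faces of $\Gamma$ that it contains, glued along the edges of type I interior to $U$. First I would describe the local combinatorics around a vertex $v$ of type I: the faces at $v$ form a cyclic sequence, consecutive faces share an edge of type I through $v$, and the opposite edges form the cycle $C(v)$ of type II discussed in Proposition \ref{theorem1}; so a disk neighborhood of $v$ lies entirely in $M\setminus\Gamma_{II}$ except for its boundary lying on $C(v)$. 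Around a vertex $w$ of type II the faces at $w$ split into maximal ``fans'' delimited by the edges of type II at $w$; each such fan, with its two bounding type-II edges removed, contributes an arc to the boundary of some component $U$.

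\textbf{Reduction to a surface with boundary.} The key step is to form the surface $N$ obtained from $M$ by removing a small open regular neighborhood of $\Gamma_{II}$; then $N$ is a compact surface with boundary, its interior is homeomorphic to $M\setminus\Gamma_{II}$, and each component of $N$ corresponds to a component $U$ of $M\setminus\Gamma_{II}$. Because $\Gamma_{II}$ is a graph embedded in $M$, every component of $N$ is a compact connected surface with nonempty boundary, hence homeomorphic to a sphere with $g\ge 0$ handles or crosscaps and $b\ge 1$ holes. I would then argue that such a component can contain \emph{at most} one vertex of type I, and if it does, $b=1$ and $g=0$. The mechanism: if $v$ is a vertex of type I in $U$, then $C(v)$ is a cycle of type-II edges whose complementary side facing $v$ is covered by the fan of faces around $v$, and this fan is itself an open disk whose boundary is all of $C(v)$; I must check that this disk is \emph{all} of $U$, i.e. that no face outside the star of $v$ can lie in the same component. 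This follows from the homogeneity-type local analysis already carried out in the proof of Proposition \ref{theorem1}, part (I): traversing zigzags forces $C(v)$ to be a full directed cycle and every edge through $v$ to be of type I, so the star of $v$ is separated from the rest of $\Gamma$ by $C(v)\subset\Gamma_{II}$. Hence $U$ is an open disk and $v$ is its unique type-I vertex, proving (2) in one direction.

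\textbf{Classifying the remaining components.} For a component $U$ containing no vertex of type I, every face meeting $U$ has its (unique) type-I vertex outside $U$; equivalently the two type-I edges of each such face both lie on $\partial\overline{U}$ (a type-I edge runs from a type-I vertex to a type-II vertex, and the type-I vertex is not in $U$). Thus the faces in $U$ are ``strips'': each face contributes a quadrilateral-free triangle two of whose sides are boundary and whose interior crosses $U$. I would now count: the triangles in $U$ glued along their \emph{type-II} sides... but those sides are not in $U$; rather they are glued along nothing interior, so the faces of $U$ form a cyclic band between two boundary curves, or a band that closes up with a half-twist. Concretely, Euler characteristic of the component $U$ of $N$: it is built from $t$ triangles, each losing its three edges to the boundary neighborhood, giving $\chi(U)=t - (\text{interior edges}) $; since no face of $\Gamma$ lies in the interior without a boundary side, one gets $\chi(U)=0$. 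A compact surface with boundary and $\chi=0$ that is connected is either a cylinder ($g=0$, $b=2$, orientable) or a M\"obius strip ($b=1$, non-orientable); the M\"obius case arises only when $M$ is non-orientable. Combined with the disk case above, this gives (1), and also completes (2): a non-disk component has $\chi=0\neq 1=\chi(\text{disk})$, so it cannot contain a vertex of type I.

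\textbf{Main obstacle.} The delicate point is the Euler-characteristic / gluing bookkeeping for a component $U$ containing no type-I vertex: I must argue rigorously that every triangular face meeting $U$ has \emph{exactly} two of its sides (the type-I ones) on $\partial\overline U$ and its third side (type-II) outside $U$, so that no two faces in $U$ are glued along an interior edge and the faces genuinely form an annular or M\"obius band. This requires a careful local description of the fans of faces around each type-II vertex on $\partial\overline U$ and of how consecutive fans are linked along type-I edges; once that picture is nailed down, $\chi(U)=0$ and the classification of surfaces with boundary finishes the argument. I would also need the final ``all these possibilities are realized'' remark, which is handled by pointing to the examples (bipyramids and the shreddings of type-II triangulations in Example \ref{ex_6}) already in the paper for the disk and cylinder cases, and to a non-orientable instance for the M\"obius case.
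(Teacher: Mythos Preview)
Your proposal contains a genuine error that undermines the main argument. You open by asserting that ``for every edge $e$ of type I, we showed in the previous section that $e$ joins a vertex of type I to a vertex of type II.'' But that fact was established in the proof of Proposition~\ref{theorem1} \emph{under the hypothesis that all zigzags are homogeneous}; Theorem~\ref{theorem2} makes no such assumption. In the general setting the claim is simply false: in a component that turns out to be a cylinder or a M\"obius strip, the configuration of Fig.~5(b) occurs, and there the type-I edge $e_{i-1}$ joins two vertices that both lie on type-II edges, hence are both of type II. Consequently your later assertion that, in a component $U$ with no type-I vertex, ``the two type-I edges of each such face both lie on $\partial\overline U$'' is also wrong: type-I edges belong to $M\setminus\Gamma_{II}$, so their interiors lie \emph{inside} $U$, not on its boundary. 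The faces in $U$ are glued to one another along type-I edges, and the type-II edges form the boundary. Your picture of faces as ``strips with two boundary sides'' is inverted, which is why the Euler-characteristic count never comes together.

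The paper's argument avoids this by building the component directly: starting from any face, follow the chain of faces across successive type-I edges (each face has exactly two, so there is a unique way to continue), obtaining a cyclic sequence $F_1,\dots,F_n$. Unfold these as abstract triangles glued along the shared type-I edges, producing a disk-shaped strip whose boundary consists of the images of the type-II edges together with the first and last type-I edges $h_1(e_0),h_n(e_n)$. The final identification of $h_1(e_0)$ with $h_n(e_n)$ then yields a disk if they share a vertex, and a cylinder or M\"obius strip if they are disjoint; the disk case is exactly the case in which all the ``apex'' vertices of the $F_i$ coincide and give a single interior vertex of type I. If you want to salvage the Euler-characteristic route, you should carry it out on this abstractly glued surface $\mathcal T$ rather than on $\overline U\subset M$, and you must first establish the chain-of-faces structure; once you know $\mathcal T$ is a disk with two boundary edges identified, the computation $\chi=1$ or $\chi=0$ is immediate and gives the same trichotomy.
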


\begin{proof}
Consider two distinct edges $e_0$ and $e_1$ of type I contained in a certain face $F_1$. 
There is precisely one face containing $e_1$ and distinct from $F_1$. 
Denote this face by $F_2$ and wright $e_2$ for the other edge of type I on $F_2$.
Recursively, we construct sequences of edges $\{e_i\}_{i\in\mathbb{N}\cup\{{0}\}}$ and faces $\{F_i\}_{i\in\mathbb{N}}$ such that $e_{i-1}$ is the common edge of $F_{i-1}, F_i$ for every $i\in\mathbb{N}$.
For any pair of the faces  $F_{i-1}, F_i$ we distinguish the following two cases presented on Fig. 5 is realized.
In the first case, the edges of type II of $F_{i-1}$ and $F_{i}$ have a common vertex (Fig. 5(a)).
In the second case (Fig. 5(b)), the edges of type II are disjoint.

\begin{center}
\begin{tikzpicture}[scale=1.2, xshift=0cm]
\begin{scope}
\draw[fill=black] (0,0) circle (1.5pt);

\draw[fill=black] (-45:2cm) circle (1.5pt);
\draw[fill=black] (-90:2cm) circle (1.5pt);
\draw[fill=black] (-135:2cm) circle (1.5pt);

\draw[thick, line width=2pt] (-45:2cm) -- (-90:2cm) -- (-135:2cm);

\draw [thick, decoration={markings,
mark=at position 0.49 with {\arrow[scale=2,>=stealth]{>}},
mark=at position 0.61 with {\arrow[scale=2,>=stealth]{>}}},
postaction={decorate}] (-135:2cm) -- (-90:2cm);

\draw [thick, decoration={markings,
mark=at position 0.49 with {\arrow[scale=2,>=stealth]{>}},
mark=at position 0.61 with {\arrow[scale=2,>=stealth]{>}}},
postaction={decorate}] (-90:2cm) -- (-45:2cm);

\draw[thick, line width=1pt, dashed] (-45:2cm) -- (0,0);
\draw[thick, line width=1pt, dashed] (-90:2cm) -- (0,0);
\draw[thick, line width=1pt, dashed] (-135:2cm) -- (0,0);

\node at (-152:1cm) {$e_{i-2}$};
\node at (-107:1cm) {$e_{i-1}$};
\node at (-35:1cm) {$e_{i}$};

\node at (-110:1.4cm) {$F_{i-1}$};
\node at (-68:1.4cm) {$F_{i}$};

\node at (-90:2.35cm) {$(a)$};
\end{scope}

\begin{scope}[xshift=4cm]
\draw[fill=black] (0,0) circle (1.5pt);

\draw[fill=black] (-90:2cm) circle (1.5pt);
\draw[fill=black] (-135:2cm) circle (1.5pt);

\draw[thick, line width=2pt] (-90:2cm) -- (-135:2cm);

\draw [thick, decoration={markings,
mark=at position 0.49 with {\arrow[scale=2,>=stealth]{>}},
mark=at position 0.61 with {\arrow[scale=2,>=stealth]{>}}},
postaction={decorate}] (-135:2cm) -- (-90:2cm);

\draw[thick, line width=1pt, dashed] (-90:2cm) -- (0,0);
\draw[thick, line width=1pt, dashed] (-135:2cm) -- (0,0);

\begin{scope}[yshift=-2cm]
\draw[fill=black] (45:2cm) circle (1.5pt);%
\draw[thick, line width=2pt] (45:2cm) -- (90:2cm);%

\draw [thick, decoration={markings,
mark=at position 0.49 with {\arrow[scale=2,>=stealth]{>}},
mark=at position 0.61 with {\arrow[scale=2,>=stealth]{>}}},
postaction={decorate}] (90:2cm) -- (45:2cm);%

\draw[thick, line width=1pt, dashed] (45:2cm) -- (0,0);%
\draw[thick, line width=1pt, dashed] (90:2cm) -- (0,0);%

\node at (68:1.2cm) {$F_{i}$};
\end{scope}

\node at (-152:1cm) {$e_{i-2}$};
\node at (-107:1cm) {$e_{i-1}$};
\node at (-8:0.8cm) {$e_{i}$};

\node at (-110:1.4cm) {$F_{i-1}$};

\node at (-90:2.35cm) {$(b)$};
\end{scope}

\end{tikzpicture}
\captionof{figure}{ }
\end{center}
Let $n$ be the smallest natural number such that $e_n=e_0$ (such a number exists by finiteness).
Therefore, the above sequences can be considered as cyclic sequences
$\{e_i\}_{i=1}^{n}$ and $\{F_i\}_{i=1}^{n}$.
The union $\mathcal{F}=\bigcup\limits_{i=1}^{n} F_{i}$ will be called a {\it component} of $(\Gamma, \tau)$. 
The boundary of $\mathcal{F}$ consists of (not necessarily all) edges of type II belonging to faces $F_i$.

Denote by $e_i^{II}$ the edge of type II belonging to $F_i$.
We take $n$ disjoint closed triangles $T_1, T_2,\dots,T_n$.
For any $i=1,2,\dots,n$ there is a homeomorphism $h_i:F_i\to T_i$ transferring any vertex and any edge of $F_i$ to a vertex and an edge of $T_i$, respectively.
We identify $h_i(e_i)$ and $h_{i+1}(e_i)$ for any $i$ in such the way that for every vertex $v$ of $e_i$ the vertices $h_{i}(v)$ and $h_{i+1}(v)$ are identified.
We get a $2$-dimensional surface $\mathcal{T}$ with boundary.
The boundary of $\mathcal{T}$ is the union of the images of all edges of type II, i.e. $\partial\mathcal{T}=\bigcup\limits_{i=1}^{n} h_i(e_i^{II})$.
Note that $\mathcal{F}$ is not necessarily a surface (since it is possible that for distinct $i,j$ the edges  $e_i^{II}, e_j^{II}$ have a common vertex).
The surface $\mathcal{T}$ is homeomorphic to one of connected components of $M\setminus{\Gamma_{II}}$ and $\mathcal{F}$ can be obtained from $\mathcal{T}$ by gluing of some parts of the boundary.

Suppose that $h_i(e_i)$ and $h_{i+1}(e_i)$ are identified only for $i=1,2,\dots,n-1$ (but not $h_1(e_0)$ and $h_n(e_n)$ from $T_{1}$ and $T_{n}$, respectively).
Then we get a space homeomorphic to a closed $2$-dimensional disk whose boundary contains $h_1(e_0), h_n(e_n)$.
Now, to complete the construction of $\mathcal{T}$, we have to glue $h_1(e_0)$ and $h_n(e_n)$.
Precisely one of the following possibilities is realized:
\\$\bullet$ A union of these sides is connected and by gluing of them we obtain that $\mathcal{T}$ homeomorphic to an open $2$-dimensional disk (Fig. 6(1)).
\\$\bullet$ The sides are disjoint and by identification of them we get a surface homeomorphic to an open M\"obius strip (Fig. 6(2)) or an open cylinder (Fig. 6(3)).

\begin{center}
\begin{tikzpicture}[scale=1.05, xshift=-1cm, yshift=0cm]
\begin{scope}[scale=0.8, xshift=-1.87cm, yshift=0.05cm]

\begin{scope}[rotate=-67.5]
\draw[fill=black] (0,0) circle (1.5pt);

\draw[fill=black] (-45:2cm) circle (1.5pt);
\draw[fill=black] (-90:2cm) circle (1.5pt);
\draw[fill=black] (-135:2cm) circle (1.5pt);

\draw[thick, line width=2pt] (-45:2cm) -- (-90:2cm) -- (-135:2cm);

\draw [thick, decoration={markings,
mark=at position 0.48 with {\arrow[scale=2,>=stealth]{>}},
mark=at position 0.62 with {\arrow[scale=2,>=stealth]{>}}},
postaction={decorate}] (-135:2cm) -- (-90:2cm);

\draw [thick, decoration={markings,
mark=at position 0.48 with {\arrow[scale=2,>=stealth]{>}},
mark=at position 0.62 with {\arrow[scale=2,>=stealth]{>}}},
postaction={decorate}] (-90:2cm) -- (-45:2cm);

\draw [thick, dashed, decoration={markings,
mark=at position 0.5 with {\arrow[scale=2]{<}}},
postaction={decorate}] (-135:2cm) -- (0,0);

\draw[thick, line width=1pt, dashed] (-45:2cm) -- (0,0);
\draw[thick, line width=1pt, dashed] (-90:2cm) -- (0,0);

\node at (-162:1.1cm) {$h_1(e_0)$};
\end{scope}
\begin{scope}[rotate=67.5]
\draw[fill=black] (0,0) circle (1.5pt);

\draw[fill=black] (-45:2cm) circle (1.5pt);
\draw[fill=black] (-90:2cm) circle (1.5pt);
\draw[fill=black] (-135:2cm) circle (1.5pt);

\draw[thick, line width=2pt] (-45:2cm) -- (-90:2cm) -- (-135:2cm);

\draw [thick, decoration={markings,
mark=at position 0.48 with {\arrow[scale=2,>=stealth]{>}},
mark=at position 0.62 with {\arrow[scale=2,>=stealth]{>}}},
postaction={decorate}] (-135:2cm) -- (-90:2cm);

\draw [thick, decoration={markings,
mark=at position 0.48 with {\arrow[scale=2,>=stealth]{>}},
mark=at position 0.62 with {\arrow[scale=2,>=stealth]{>}}},
postaction={decorate}] (-90:2cm) -- (-45:2cm);

\draw [thick, dashed, decoration={markings,
mark=at position 0.5 with {\arrow[scale=2]{<}}},
postaction={decorate}] (-45:2cm) -- (0,0);

\draw[thick, line width=1pt, dashed] (-45:2cm) -- (0,0);
\draw[thick, line width=1pt, dashed] (-90:2cm) -- (0,0);
\draw[thick, line width=1pt, dashed] (-135:2cm) -- (0,0);

\node at (-18:1.02cm) {$h_n(e_n)$};

\end{scope}
\draw[thick, dashed, line width=2pt] (-112.5:2cm) -- (-67.5:2cm);
\node at (-90:1.45cm) {\Huge{$.\hspace{0.05cm}.\hspace{0.05cm}.$}};
\node at (-90:2.35cm) {$(1)$};
\end{scope}

\begin{scope}[xshift=-5cm, yshift=-2.3cm]

\draw[fill=black] (0,0) circle (1.5pt);
\draw[fill=black] (1.5,0) circle (1.5pt);
\draw[fill=black] (3,0) circle (1.5pt);

\draw[thick, line width=2pt] (0,0) -- (3,0);
\draw [thick, decoration={markings,
mark=at position 0.48 with {\arrow[scale=2,>=stealth]{>}},
mark=at position 0.62 with {\arrow[scale=2,>=stealth]{>}}},
postaction={decorate}] (0,0) -- (1.5,0);
\draw [thick, decoration={markings,
mark=at position 0.48 with {\arrow[scale=2,>=stealth]{>}},
mark=at position 0.62 with {\arrow[scale=2,>=stealth]{>}}},
postaction={decorate}] (1.5,0) -- (3,0);

\draw[fill=black] (-0.75,-1.34) circle (1.5pt);
\draw[fill=black] (0.25,-1.34) circle (1.5pt);
\draw[fill=black] (1.25,-1.34) circle (1.5pt);
\draw[fill=black] (2.25,-1.34) circle (1.5pt);

\draw[thick, line width=2pt] (-0.75,-1.34) -- (2.25,-1.34);
\draw [thick, decoration={markings,
mark=at position 0.50 with {\arrow[scale=2,>=stealth]{>}},
mark=at position 0.68 with {\arrow[scale=2,>=stealth]{>}}},
postaction={decorate}] (-0.75,-1.34) -- (0.25,-1.34);
\draw [thick, decoration={markings,
mark=at position 0.50 with {\arrow[scale=2,>=stealth]{>}},
mark=at position 0.68 with {\arrow[scale=2,>=stealth]{>}}},
postaction={decorate}] (0.25,-1.34) -- (1.25,-1.34);
\draw [thick, decoration={markings,
mark=at position 0.50 with {\arrow[scale=2,>=stealth]{>}},
mark=at position 0.68 with {\arrow[scale=2,>=stealth]{>}}},
postaction={decorate}] (1.25,-1.34) -- (2.25,-1.34);

\draw[thick, dashed] (0,0) -- (0.25,-1.34) -- (1.5,0) -- (1.25,-1.34);
\draw[thick, dashed] (0,0) -- (0.25,-1.34) -- (1.5,0) -- (2.25,-1.34);
\draw[thick, dashed] (3,0) -- (2.25,-1.34);

\draw [thick, dashed, decoration={markings,
mark=at position 0.5 with {\arrow[scale=2]{<}}},
postaction={decorate}] (-0.75,-1.34) -- (0,0);

\node at (-1,-0.62) {$h_1(e_0)$};

\node at (3.5,-0.8) {\Huge{$.\hspace{0.15cm}.\hspace{0.15cm}.$}};
\draw[thick, dashed, line width=2pt] (3,0) -- (4,0);
\draw[thick, dashed, line width=2pt] (2.25,-1.34) -- (4.75,-1.34);

\draw[fill=black] (4,0) circle (1.5pt);
\draw[fill=black] (5,0) circle (1.5pt);
\draw[fill=black] (6,0) circle (1.5pt);
\draw[fill=black] (7,0) circle (1.5pt);

\draw[thick, line width=2pt] (4,0) -- (7,0);
\draw [thick, decoration={markings,
mark=at position 0.50 with {\arrow[scale=2,>=stealth]{>}},
mark=at position 0.68 with {\arrow[scale=2,>=stealth]{>}}},
postaction={decorate}] (4,0) -- (5.,0);
\draw [thick, decoration={markings,
mark=at position 0.50 with {\arrow[scale=2,>=stealth]{>}},
mark=at position 0.68 with {\arrow[scale=2,>=stealth]{>}}},
postaction={decorate}] (5,0) -- (6,0);
\draw [thick, decoration={markings,
mark=at position 0.50 with {\arrow[scale=2,>=stealth]{>}},
mark=at position 0.68 with {\arrow[scale=2,>=stealth]{>}}},
postaction={decorate}] (6,0) -- (7,0);

\draw[fill=black] (4.75,-1.34) circle (1.5pt);
\draw[fill=black] (5.75,-1.34) circle (1.5pt);
\draw[fill=black] (6.75,-1.34) circle (1.5pt);
\draw[fill=black] (7.75,-1.34) circle (1.5pt);

\draw[thick, line width=2pt] (4.75,-1.34) -- (7.75,-1.34);
\draw [thick, decoration={markings,
mark=at position 0.50 with {\arrow[scale=2,>=stealth]{>}},
mark=at position 0.68 with {\arrow[scale=2,>=stealth]{>}}},
postaction={decorate}] (4.75,-1.34) -- (5.75,-1.34);
\draw [thick, decoration={markings,
mark=at position 0.50 with {\arrow[scale=2,>=stealth]{>}},
mark=at position 0.68 with {\arrow[scale=2,>=stealth]{>}}},
postaction={decorate}] (5.75,-1.34) -- (6.75,-1.34);
\draw [thick, decoration={markings,
mark=at position 0.50 with {\arrow[scale=2,>=stealth]{>}},
mark=at position 0.68 with {\arrow[scale=2,>=stealth]{>}}},
postaction={decorate}] (6.75,-1.34) -- (7.75,-1.34);

\draw[thick, dashed] (4,0) -- (4.75,-1.34) -- (5,0) -- (5.75,-1.34);
\draw[thick, dashed] (5,0) -- (6.75,-1.34);
\draw[thick, dashed] (6,0) -- (6.75,-1.34) -- (7,0);

\draw [thick, dashed, decoration={markings,
mark=at position 0.5 with {\arrow[scale=2]{<}}},
postaction={decorate}] (7,0) -- (7.75,-1.34);

\node at (8.1,-0.62) {$h_n(e_n)$};

\node at (3.5,-1.72) {$(2)$};
\end{scope}


\begin{scope}[xshift=-5cm, yshift=-4.6cm]

\draw[fill=black] (0,0) circle (1.5pt);
\draw[fill=black] (1.5,0) circle (1.5pt);
\draw[fill=black] (3,0) circle (1.5pt);

\draw[thick, line width=2pt] (0,0) -- (3,0);
\draw [thick, decoration={markings,
mark=at position 0.48 with {\arrow[scale=2,>=stealth]{>}},
mark=at position 0.62 with {\arrow[scale=2,>=stealth]{>}}},
postaction={decorate}] (0,0) -- (1.5,0);
\draw [thick, decoration={markings,
mark=at position 0.48 with {\arrow[scale=2,>=stealth]{>}},
mark=at position 0.62 with {\arrow[scale=2,>=stealth]{>}}},
postaction={decorate}] (1.5,0) -- (3,0);

\draw[fill=black] (-0.75,-1.34) circle (1.5pt);
\draw[fill=black] (0.25,-1.34) circle (1.5pt);
\draw[fill=black] (1.25,-1.34) circle (1.5pt);
\draw[fill=black] (2.25,-1.34) circle (1.5pt);

\draw[thick, line width=2pt] (-0.75,-1.34) -- (2.25,-1.34);
\draw [thick, decoration={markings,
mark=at position 0.50 with {\arrow[scale=2,>=stealth]{>}},
mark=at position 0.68 with {\arrow[scale=2,>=stealth]{>}}},
postaction={decorate}] (-0.75,-1.34) -- (0.25,-1.34);
\draw [thick, decoration={markings,
mark=at position 0.50 with {\arrow[scale=2,>=stealth]{>}},
mark=at position 0.68 with {\arrow[scale=2,>=stealth]{>}}},
postaction={decorate}] (0.25,-1.34) -- (1.25,-1.34);
\draw [thick, decoration={markings,
mark=at position 0.50 with {\arrow[scale=2,>=stealth]{>}},
mark=at position 0.68 with {\arrow[scale=2,>=stealth]{>}}},
postaction={decorate}] (1.25,-1.34) -- (2.25,-1.34);

\draw[thick, dashed] (0,0) -- (0.25,-1.34) -- (1.5,0) -- (1.25,-1.34);
\draw[thick, dashed] (0,0) -- (0.25,-1.34) -- (1.5,0) -- (2.25,-1.34);
\draw[thick, dashed] (3,0) -- (2.25,-1.34);

\draw [thick, dashed, decoration={markings,
mark=at position 0.5 with {\arrow[scale=2]{<}}},
postaction={decorate}] (-0.75,-1.34) -- (0,0);

\node at (-1,-0.62) {$h_1(e_0)$};

\node at (3.5,-0.8) {\Huge{$.\hspace{0.15cm}.\hspace{0.15cm}.$}};
\draw[thick, dashed, line width=2pt] (3,0) -- (4,0);
\draw[thick, dashed, line width=2pt] (2.25,-1.34) -- (4.75,-1.34);

\draw[fill=black] (4,0) circle (1.5pt);
\draw[fill=black] (5,0) circle (1.5pt);
\draw[fill=black] (6,0) circle (1.5pt);
\draw[fill=black] (7,0) circle (1.5pt);

\draw[thick, line width=2pt] (4,0) -- (7,0);
\draw [thick, decoration={markings,
mark=at position 0.50 with {\arrow[scale=2,>=stealth]{>}},
mark=at position 0.68 with {\arrow[scale=2,>=stealth]{>}}},
postaction={decorate}] (4,0) -- (5.,0);
\draw [thick, decoration={markings,
mark=at position 0.50 with {\arrow[scale=2,>=stealth]{>}},
mark=at position 0.68 with {\arrow[scale=2,>=stealth]{>}}},
postaction={decorate}] (5,0) -- (6,0);
\draw [thick, decoration={markings,
mark=at position 0.50 with {\arrow[scale=2,>=stealth]{>}},
mark=at position 0.68 with {\arrow[scale=2,>=stealth]{>}}},
postaction={decorate}] (6,0) -- (7,0);

\draw[fill=black] (4.75,-1.34) circle (1.5pt);
\draw[fill=black] (5.75,-1.34) circle (1.5pt);
\draw[fill=black] (6.75,-1.34) circle (1.5pt);
\draw[fill=black] (7.75,-1.34) circle (1.5pt);

\draw[thick, line width=2pt] (4.75,-1.34) -- (7.75,-1.34);
\draw [thick, decoration={markings,
mark=at position 0.50 with {\arrow[scale=2,>=stealth]{>}},
mark=at position 0.68 with {\arrow[scale=2,>=stealth]{>}}},
postaction={decorate}] (4.75,-1.34) -- (5.75,-1.34);
\draw [thick, decoration={markings,
mark=at position 0.50 with {\arrow[scale=2,>=stealth]{>}},
mark=at position 0.68 with {\arrow[scale=2,>=stealth]{>}}},
postaction={decorate}] (5.75,-1.34) -- (6.75,-1.34);
\draw [thick, decoration={markings,
mark=at position 0.50 with {\arrow[scale=2,>=stealth]{>}},
mark=at position 0.68 with {\arrow[scale=2,>=stealth]{>}}},
postaction={decorate}] (6.75,-1.34) -- (7.75,-1.34);

\draw[thick, dashed] (4,0) -- (4.75,-1.34) -- (5,0) -- (5.75,-1.34);
\draw[thick, dashed] (5,0) -- (6.75,-1.34);
\draw[thick, dashed] (6,0) -- (6.75,-1.34) -- (7,0);

\draw [thick, dashed, decoration={markings,
mark=at position 0.5 with {\arrow[scale=2]{<}}},
postaction={decorate}] (7.75,-1.34) -- (7,0);

\node at (8.1,-0.62) {$h_n(e_n)$};

\node at (3.5,-1.72) {$(3)$};
\end{scope}

\end{tikzpicture}
\captionof{figure}{ }
\end{center}
Let $v_i$ be the vetrex of $T_i$ corresponding to the vertex of $F_i$ not belonging to the edge $e_i^{II}$.
In the first case, the images of edges of type I have the common vertex which is the image of all $h_i(v_i)$;
it is clear that this vertex corresponds to the vertex of type I from $\mathcal{F}$, see Fig. 6(1).
In the remaining cases, any vertex $h_i(v_i)$ is contained in the boundary of $\mathcal{T}$ and correspond to a certain vertex of $\Gamma_{II}$ (see Fig. 6(2) and 7(3)).
So, we obtained the statements (1) and (2).
\end{proof}

If a connected component of $M\setminus\Gamma_{II}$ is homeomorphic to an open $2$-dimensional disk, then the corresponding component of $(\Gamma,\tau)$ is homeomorphic to a closed $2$-dimensional disk
(if this component has some identifications at the boundary, then the vertex of type I in this component is joined by a double edge to a certain vertex at the boundary which is impossible, since we work with embeddings of simple graphs).

\begin{proof}[Proof of (3) $\!\Rightarrow\!$ (1) in Theorem \ref{newth1}]
Assume that each connected component of ${M\!\setminus\!\Gamma_{\!I\!I}}$ is a disk. 
By the above remark, $\Gamma_{II}$ is a closed 2-cell embedding. 
Lemma \ref{lemma1} show that this is an embedding of simple Eulerian digraph. 
The second part of Theorem \ref{theorem2} states that each disk contains a unique vertex of type I; 
as in the proof of Theorem \ref{theorem2} we establish that its boundary is an oriented cycle. 
We have $\Gamma={\rm T}(\Gamma_{II})$ and the second part of Proposition \ref{theorem1} gives the claim.
\end{proof}

The following three examples show that all possibilities for connected components of $M\setminus\Gamma_{II}$ are realized.
\begin{exmp}\label{ex9}{\rm
Consider the following triangulation $\Gamma$ of a torus $\mathbb{T}=\mathbb{S}^{1}\times\mathbb{S}^{1}$ (see Fig. 7).
It contains precisely three zigzags (up to reversing):
\begin{enumerate}
\item[$\bullet$] $e_0, e'_0, e_1, e'_1, e_2, e'_2, e_3, e'_3, e_4 e'_4$,
\item[$\bullet$] $0, e'_0, 2, e'_2, 4, e'_4, 1, e'_1, 3, e'_3,$
\item[$\bullet$] $0, e'_4, 4, e'_3, 3, e'_2, 2, e'_1, 1, e'_0.$
\end{enumerate} 
In the corresponding $z$-orientation $\tau$ all faces are of type I.
The subgraph $\Gamma_{II}$ is the simple $5$-cycle, but $\mathbb{T}\setminus\Gamma_{II}$ is connected and homeomorphic to the open cylinder $(-1,1)\times\mathbb{S}^{1}$.
Moreover, there is only one component of $(\Gamma,\tau)$ homeomorphic to the entire surface $\mathbb{T}$. 
\begin{center}
\begin{tikzpicture}[scale=0.8, xshift=0cm]

\draw[fill=black] (90:1.5cm) circle (1.5pt);
\draw[fill=black] (162:1.5cm) circle (1.5pt);
\draw[fill=black] (234:1.5cm) circle (1.5pt);
\draw[fill=black] (306:1.5cm) circle (1.5pt);
\draw[fill=black] (378:1.5cm) circle (1.5pt);

\draw[fill=black] (90:3cm) circle (1.5pt);
\draw[fill=black] (162:3cm) circle (1.5pt);
\draw[fill=black] (234:3cm) circle (1.5pt);
\draw[fill=black] (306:3cm) circle (1.5pt);
\draw[fill=black] (378:3cm) circle (1.5pt);

\draw[thick, line width=2pt] (90:1.5cm) -- (162:1.5cm) -- (234:1.5cm) -- (306:1.5cm) -- (378:1.5cm) -- cycle;
\draw[thick, line width=2pt] (90:3cm) -- (162:3cm) -- (234:3cm) -- (306:3cm) -- (378:3cm) -- cycle;

\draw[thick, line width=1pt, dashed] (234:3cm) -- (234:1.5cm) -- (306:3cm) -- (306:1.5cm) -- (378:3cm) -- (378:1.5cm) -- (90:3cm) -- (90:1.5cm) -- (162:3cm) -- (162:1.5cm) -- cycle;

\draw [thick, decoration={markings,
mark=at position 0.49 with {\arrow[scale=2,>=stealth]{>}},
mark=at position 0.61 with {\arrow[scale=2,>=stealth]{>}}},
postaction={decorate}] (90:1.5cm) -- (162:1.5cm);
\draw [thick, decoration={markings,
mark=at position 0.49 with {\arrow[scale=2,>=stealth]{>}},
mark=at position 0.61 with {\arrow[scale=2,>=stealth]{>}}},
postaction={decorate}] (162:1.5cm) -- (234:1.5cm);
\draw [thick, decoration={markings,
mark=at position 0.49 with {\arrow[scale=2,>=stealth]{>}},
mark=at position 0.61 with {\arrow[scale=2,>=stealth]{>}}},
postaction={decorate}] (234:1.5cm) -- (306:1.5cm);
\draw [thick, decoration={markings,
mark=at position 0.49 with {\arrow[scale=2,>=stealth]{>}},
mark=at position 0.61 with {\arrow[scale=2,>=stealth]{>}}},
postaction={decorate}] (306:1.5cm) -- (378:1.5cm);
\draw [thick, decoration={markings,
mark=at position 0.49 with {\arrow[scale=2,>=stealth]{>}},
mark=at position 0.61 with {\arrow[scale=2,>=stealth]{>}}},
postaction={decorate}] (378:1.5cm) -- (90:1.5cm);

\draw [thick, decoration={markings,
mark=at position 0.52 with {\arrow[scale=2,>=stealth]{>}},
mark=at position 0.58 with {\arrow[scale=2,>=stealth]{>}}},
postaction={decorate}] (90:3cm) -- (162:3cm);
\draw [thick, decoration={markings,
mark=at position 0.52 with {\arrow[scale=2,>=stealth]{>}},
mark=at position 0.58 with {\arrow[scale=2,>=stealth]{>}}},
postaction={decorate}] (162:3cm) -- (234:3cm);
\draw [thick, decoration={markings,
mark=at position 0.52 with {\arrow[scale=2,>=stealth]{>}},
mark=at position 0.58 with {\arrow[scale=2,>=stealth]{>}}},
postaction={decorate}] (234:3cm) -- (306:3cm);
\draw [thick, decoration={markings,
mark=at position 0.52 with {\arrow[scale=2,>=stealth]{>}},
mark=at position 0.58 with {\arrow[scale=2,>=stealth]{>}}},
postaction={decorate}] (306:3cm) -- (378:3cm);
\draw [thick, decoration={markings,
mark=at position 0.52 with {\arrow[scale=2,>=stealth]{>}},
mark=at position 0.58 with {\arrow[scale=2,>=stealth]{>}}},
postaction={decorate}] (378:3cm) -- (90:3cm);

\node at (-90:0.85cm) {$2$};
\node at (-18:0.85cm) {$3$};
\node at (54:0.85cm) {$4$};
\node at (126:0.85cm) {$0$};
\node at (198:0.85cm) {$1$};

\node at (-90:2.8cm) {$0$};
\node at (-18:2.8cm) {$1$};
\node at (54:2.8cm) {$2$};
\node at (126:2.8cm) {$3$};
\node at (198:2.8cm) {$4$};

\node at (-90:1.9cm) {$e'_0$};
\node at (-18:1.9cm) {$e'_1$};
\node at (54:1.9cm) {$e'_2$};
\node at (126:1.9cm) {$e'_3$};
\node at (198:1.9cm) {$e'_4$};

\node at (97:2.25cm) {$e_3$}; 
\node at (169:2.25cm) {$e_4$};
\node at (241:2.25cm) {$e_0$};
\node at (313:2.25cm) {$e_1$};
\node at (385:2.25cm) {$e_2$};

\end{tikzpicture}
\captionof{figure}{ }
\end{center}
This example can be generalized as follows.
Let $n,k\in\mathbb{N}$ and let $\Gamma$ be the triangulation of a torus presented on the figure below (each horizontal edge denoted by $l$ is identified with the edge labeled by $l+k\:(\text{mod }n$).

\begin{center}
\begin{tikzpicture}[scale=0.8, xshift=0cm]

\draw[fill=black] (0,0) circle (1.5pt);
\draw[fill=black] (2,0) circle (1.5pt);
\draw[fill=black] (4,0) circle (1.5pt);
\draw[fill=black] (6,0) circle (1.5pt);
\draw[fill=black] (8,0) circle (1.5pt);

\draw[fill=black] (10,0) circle (1.5pt);
\draw[fill=black] (12,0) circle (1.5pt);

\draw[fill=black] (0,2) circle (1.5pt);
\draw[fill=black] (2,2) circle (1.5pt);
\draw[fill=black] (4,2) circle (1.5pt);
\draw[fill=black] (6,2) circle (1.5pt);
\draw[fill=black] (8,2) circle (1.5pt);

\draw[fill=black] (10,2) circle (1.5pt);
\draw[fill=black] (12,2) circle (1.5pt);

\draw[thick, line width=2pt] (0,0) -- (8,0);
\draw[thick, line width=2pt] (10,0) -- (12,0);

\draw[thick, line width=2pt] (0,2) -- (8,2);
\draw[thick, line width=2pt] (10,2) -- (12,2);

\draw[thick, line width=1pt, dashed] (0,0) -- (0,2) -- (2,0) -- (2,2) -- (4,0) -- (4,2) -- (6,0) -- (6,2) -- (8,0) -- (8,2);
\draw[thick, line width=1pt, dashed] (10,0) -- (10,2) -- (12,0) -- (12,2);

\draw [thick, decoration={markings,
mark=at position 0.49 with {\arrow[scale=2,>=stealth]{>}},
mark=at position 0.61 with {\arrow[scale=2,>=stealth]{>}}},
postaction={decorate}] (0,0) -- (2,0);
\draw [thick, decoration={markings,
mark=at position 0.49 with {\arrow[scale=2,>=stealth]{>}},
mark=at position 0.61 with {\arrow[scale=2,>=stealth]{>}}},
postaction={decorate}] (2,0) -- (4,0);
\draw [thick, decoration={markings,
mark=at position 0.49 with {\arrow[scale=2,>=stealth]{>}},
mark=at position 0.61 with {\arrow[scale=2,>=stealth]{>}}},
postaction={decorate}] (4,0) -- (6,0);
\draw [thick, decoration={markings,
mark=at position 0.49 with {\arrow[scale=2,>=stealth]{>}},
mark=at position 0.61 with {\arrow[scale=2,>=stealth]{>}}},
postaction={decorate}] (6,0) -- (8,0);

\draw [thick, decoration={markings,
mark=at position 0.49 with {\arrow[scale=2,>=stealth]{>}},
mark=at position 0.61 with {\arrow[scale=2,>=stealth]{>}}},
postaction={decorate}] (10,0) -- (12,0);

\draw [thick, decoration={markings,
mark=at position 0.49 with {\arrow[scale=2,>=stealth]{>}},
mark=at position 0.61 with {\arrow[scale=2,>=stealth]{>}}},
postaction={decorate}] (0,2) -- (2,2);
\draw [thick, decoration={markings,
mark=at position 0.49 with {\arrow[scale=2,>=stealth]{>}},
mark=at position 0.61 with {\arrow[scale=2,>=stealth]{>}}},
postaction={decorate}] (2,2) -- (4,2);
\draw [thick, decoration={markings,
mark=at position 0.49 with {\arrow[scale=2,>=stealth]{>}},
mark=at position 0.61 with {\arrow[scale=2,>=stealth]{>}}},
postaction={decorate}] (4,2) -- (6,2);
\draw [thick, decoration={markings,
mark=at position 0.49 with {\arrow[scale=2,>=stealth]{>}},
mark=at position 0.61 with {\arrow[scale=2,>=stealth]{>}}},
postaction={decorate}] (6,2) -- (8,2);

\draw [thick, decoration={markings,
mark=at position 0.49 with {\arrow[scale=2,>=stealth]{>}},
mark=at position 0.61 with {\arrow[scale=2,>=stealth]{>}}},
postaction={decorate}] (10,2) -- (12,2);

\draw[thick, dashed, line width=2pt] (8,0) -- (10,0);
\draw[thick, dashed, line width=2pt] (8,2) -- (10,2);

\node at (9,1) {\Huge{$.\hspace{0.15cm}.\hspace{0.15cm}.$}};

\draw [thick, dashed, decoration={markings,
mark=at position 0.5 with {\arrow[scale=2]{<}}},
postaction={decorate}] (0,2) -- (0,0);
\draw [thick, dashed, decoration={markings,
mark=at position 0.5 with {\arrow[scale=2]{<}}},
postaction={decorate}] (12,2) -- (12,0);

\node at (1,-0.4) {$0$};
\node at (3,-0.4) {$1$};
\node at (5,-0.4) {$2$};
\node at (7,-0.4) {$3$};
\node at (11,-0.4) {$n\!-\!1$};

\node at (1,2.4) {$k$};
\node at (3,2.4) {$1\!+\!k$};
\node at (5,2.4) {$2\!+\!k$};
\node at (7,2.4) {$3\!+\!k$};
\node at (11,2.4) {$n\!-\!1\!+\!k$};

\node at (1.2,1.2) {$e'_0$};
\node at (3.2,1.2) {$e'_1$};
\node at (5.2,1.2) {$e'_2$};
\node at (7.2,1.2) {$e'_3$};
\node at (11.38,1.2) {$e'_{n-1}$};

\node at (-0.35,0.9) {$e_0$};
\node at (1.7,0.9) {$e_1$};
\node at (3.7,0.9) {$e_2$};
\node at (5.7,0.9) {$e_3$};
\node at (10.5,0.9) {$e_{n-1}$};
\node at (12.4,0.9) {$e_0$};

\end{tikzpicture}
\captionof{figure}{ }
\end{center}
It is necessarily to assume that $n\geq 5$ and $2\leq k\leq n-3$ to avoid multiple edges and loops.
This triangulation has the following three types of zigzags (up to reversing):
\begin{enumerate}
\item[$\bullet$] $e_0, e'_0, e_1, e'_1, e_2, e'_2, e_3, e'_3, \dots, e_{n-1}, e'_{n-1}$,
\item[$\bullet$] zigzags of type $l, e'_{l}, l+k, e'_{l+k}, l+2k, e'_{l+2k}, \dots$
\item[$\bullet$] zigzags of type $l, e'_{l+1-k}, l+1-k, e'_{l+2(1-k)}, l+2(1-k), e'_{l+3(1-k)}, \dots$
\end{enumerate} 
As above, we come to a $z$-oriented triangulation with the faces of type I such that
$\Gamma_{II}$ is the simple $n$-cycle and 
$\mathbb{T}\setminus\Gamma_{II}$ is homeomorphic to an open cylinder.
}\end{exmp}

\begin{exmp}\label{ex10}{\rm
Let $n\in\mathbb{N}$ and let $\Gamma$ be the triangulation of a real projective plane obtained by gluing of boundaries of a M\"obius strip and a closed $2$-dimensional disk (see Fig. 9).
According to the corresponding $z$-orientation all faces are of type I and the graph $\Gamma_{II}$ consists of all edges marked by the double arrows and their vertices.
Then $\mathbb{R}\text{P}^2\setminus\Gamma_{II}$ has two connected components.
One of them is homeomorphic to an open $2$-dimensional disk and the remaining to an open M\"obius strip.
\begin{center}
\begin{tikzpicture}[scale=1, xshift=0cm]
\begin{scope}[scale=0.8]
\draw[fill=black] (0,0) circle (1.5pt);
\draw[fill=black] (2,0) circle (1.5pt);
\draw[fill=black] (4,0) circle (1.5pt);
\draw[fill=black] (6,0) circle (1.5pt);
\draw[fill=black] (8,0) circle (1.5pt);
\draw[fill=black] (10,0) circle (1.5pt);
\draw[fill=black] (12,0) circle (1.5pt);

\draw[fill=black] (0,2) circle (1.5pt);
\draw[fill=black] (2,2) circle (1.5pt);
\draw[fill=black] (4,2) circle (1.5pt);
\draw[fill=black] (6,2) circle (1.5pt);
\draw[fill=black] (8,2) circle (1.5pt);

\draw[fill=black] (10,2) circle (1.5pt);
\draw[fill=black] (12,2) circle (1.5pt);

\draw[thick, line width=2pt] (0,0) -- (8,0);
\draw[thick, line width=2pt] (10,0) -- (12,0);

\draw[thick, line width=2pt] (0,2) -- (8,2);
\draw[thick, line width=2pt] (10,2) -- (12,2);

\draw[thick, line width=1pt, dashed] (0,0) -- (0,2) -- (2,0) -- (2,2) -- (4,0) -- (4,2) -- (6,0) -- (6,2) -- (8,0) -- (8,2);
\draw[thick, line width=1pt, dashed] (10,0) -- (10,2) -- (12,0);

\draw [thick, decoration={markings,
mark=at position 0.49 with {\arrow[scale=2,>=stealth]{>}},
mark=at position 0.61 with {\arrow[scale=2,>=stealth]{>}}},
postaction={decorate}] (0,0) -- (2,0);
\draw [thick, decoration={markings,
mark=at position 0.49 with {\arrow[scale=2,>=stealth]{>}},
mark=at position 0.61 with {\arrow[scale=2,>=stealth]{>}}},
postaction={decorate}] (2,0) -- (4,0);
\draw [thick, decoration={markings,
mark=at position 0.49 with {\arrow[scale=2,>=stealth]{>}},
mark=at position 0.61 with {\arrow[scale=2,>=stealth]{>}}},
postaction={decorate}] (4,0) -- (6,0);
\draw [thick, decoration={markings,
mark=at position 0.49 with {\arrow[scale=2,>=stealth]{>}},
mark=at position 0.61 with {\arrow[scale=2,>=stealth]{>}}},
postaction={decorate}] (6,0) -- (8,0);

\draw [thick, decoration={markings,
mark=at position 0.49 with {\arrow[scale=2,>=stealth]{>}},
mark=at position 0.61 with {\arrow[scale=2,>=stealth]{>}}},
postaction={decorate}] (10,0) -- (12,0);

\draw [thick, decoration={markings,
mark=at position 0.49 with {\arrow[scale=2,>=stealth]{>}},
mark=at position 0.61 with {\arrow[scale=2,>=stealth]{>}}},
postaction={decorate}] (0,2) -- (2,2);
\draw [thick, decoration={markings,
mark=at position 0.49 with {\arrow[scale=2,>=stealth]{>}},
mark=at position 0.61 with {\arrow[scale=2,>=stealth]{>}}},
postaction={decorate}] (2,2) -- (4,2);
\draw [thick, decoration={markings,
mark=at position 0.49 with {\arrow[scale=2,>=stealth]{>}},
mark=at position 0.61 with {\arrow[scale=2,>=stealth]{>}}},
postaction={decorate}] (4,2) -- (6,2);
\draw [thick, decoration={markings,
mark=at position 0.49 with {\arrow[scale=2,>=stealth]{>}},
mark=at position 0.61 with {\arrow[scale=2,>=stealth]{>}}},
postaction={decorate}] (6,2) -- (8,2);

\draw [thick, decoration={markings,
mark=at position 0.49 with {\arrow[scale=2,>=stealth]{>}},
mark=at position 0.61 with {\arrow[scale=2,>=stealth]{>}}},
postaction={decorate}] (10,2) -- (12,2);

\draw[thick, dashed, line width=2pt] (8,0) -- (10,0);
\draw[thick, dashed, line width=2pt] (8,2) -- (10,2);

\node at (9,1) {\Huge{$.\hspace{0.15cm}.\hspace{0.15cm}.$}};

\draw [thick, dashed, decoration={markings,
mark=at position 0.5 with {\arrow[scale=2]{<}}},
postaction={decorate}] (0,0) -- (0,2);
\draw [thick, dashed, decoration={markings,
mark=at position 0.5 with {\arrow[scale=2]{<}}},
postaction={decorate}] (12,2) -- (12,0);

\node at (1,2.4) {$n\!+\!1$};
\node at (3,2.4) {$n\!+\!2$};
\node at (5,2.4) {$n\!+\!3$};
\node at (7,2.4) {$n\!+\!4$};
\node at (11,2.4) {$2n$};

\node at (-0.3,0.9) {$e$};
\node at (12.25,0.9) {$e$};

\node at (1,-0.4) {$1$};
\node at (3,-0.4) {$2$};
\node at (5,-0.4) {$3$};
\node at (7,-0.4) {$4$};
\node at (11,-0.4) {$n$};

\end{scope}


\begin{scope}[xshift=4.83cm, yshift=-2.8cm, rotate=-60]

\draw[fill=black] (0,0) circle (1.5pt);

\draw[fill=black] (15:2cm) circle (1.5pt);
\draw[fill=black] (45:2cm) circle (1.5pt);
\draw[fill=black] (75:2cm) circle (1.5pt);
\draw[fill=black] (105:2cm) circle (1.5pt);
\draw[fill=black] (135:2cm) circle (1.5pt);
\draw[fill=black] (165:2cm) circle (1.5pt);
\draw[fill=black] (195:2cm) circle (1.5pt);
\draw[fill=black] (225:2cm) circle (1.5pt);
\draw[fill=black] (255:2cm) circle (1.5pt);
\draw[fill=black] (285:2cm) circle (1.5pt);
\draw[fill=black] (315:2cm) circle (1.5pt);
\draw[fill=black] (345:2cm) circle (1.5pt);

\draw [thick, decoration={markings,
mark=at position 0.54 with {\arrow[scale=2,>=stealth]{<}},
mark=at position 0.7 with {\arrow[scale=2,>=stealth]{<}}},
postaction={decorate}] (15:2cm) -- (45:2cm);
\draw [thick, decoration={markings,
mark=at position 0.54 with {\arrow[scale=2,>=stealth]{<}},
mark=at position 0.7 with {\arrow[scale=2,>=stealth]{<}}},
postaction={decorate}] (45:2cm) -- (75:2cm);
\draw [thick, decoration={markings,
mark=at position 0.54 with {\arrow[scale=2,>=stealth]{<}},
mark=at position 0.7 with {\arrow[scale=2,>=stealth]{<}}},
postaction={decorate}] (75:2cm) -- (105:2cm);
\draw [thick, decoration={markings,
mark=at position 0.54 with {\arrow[scale=2,>=stealth]{<}},
mark=at position 0.7 with {\arrow[scale=2,>=stealth]{<}}},
postaction={decorate}] (105:2cm) -- (135:2cm);
\draw [thick, decoration={markings,
mark=at position 0.54 with {\arrow[scale=2,>=stealth]{<}},
mark=at position 0.7 with {\arrow[scale=2,>=stealth]{<}}},
postaction={decorate}] (165:2cm) -- (195:2cm);
\draw [thick, decoration={markings,
mark=at position 0.54 with {\arrow[scale=2,>=stealth]{<}},
mark=at position 0.7 with {\arrow[scale=2,>=stealth]{<}}},
postaction={decorate}] (195:2cm) -- (225:2cm);
\draw [thick, decoration={markings,
mark=at position 0.54 with {\arrow[scale=2,>=stealth]{<}},
mark=at position 0.7 with {\arrow[scale=2,>=stealth]{<}}},
postaction={decorate}] (225:2cm) -- (255:2cm);
\draw [thick, decoration={markings,
mark=at position 0.54 with {\arrow[scale=2,>=stealth]{<}},
mark=at position 0.7 with {\arrow[scale=2,>=stealth]{<}}},
postaction={decorate}] (255:2cm) -- (285:2cm);
\draw [thick, decoration={markings,
mark=at position 0.54 with {\arrow[scale=2,>=stealth]{<}},
mark=at position 0.7 with {\arrow[scale=2,>=stealth]{<}}},
postaction={decorate}] (285:2cm) -- (315:2cm);
\draw [thick, decoration={markings,
mark=at position 0.54 with {\arrow[scale=2,>=stealth]{<}},
mark=at position 0.7 with {\arrow[scale=2,>=stealth]{<}}},
postaction={decorate}] (345:2cm) -- (15:2cm);

\draw[thick, line width=2pt] (345:2cm) -- (15:2cm) -- (45:2cm) -- (75:2cm) -- (105:2cm) -- (105:2cm) -- (135:2cm);
\draw[thick, line width=2pt] (165:2cm) -- (195:2cm) -- (225:2cm) -- (255:2cm) -- (285:2cm) -- (315:2cm);
\draw[thick, dashed, line width=2pt] (135:2cm) -- (165:2cm);
\draw[thick, dashed, line width=2pt] (315:2cm) -- (345:2cm);

\draw[thick, line width=1pt, dashed] (345:2cm) -- (0,0) -- (15:2cm);
\draw[thick, line width=1pt, dashed] (45:2cm) -- (0,0) -- (75:2cm);
\draw[thick, line width=1pt, dashed] (105:2cm) -- (0,0) -- (135:2cm);
\draw[thick, line width=1pt, dashed] (165:2cm) -- (0,0) -- (195:2cm);
\draw[thick, line width=1pt, dashed] (225:2cm) -- (0,0) -- (255:2cm);
\draw[thick, line width=1pt, dashed] (285:2cm) -- (0,0) -- (315:2cm);

\node at (0:2.3cm) {$4$};
\node at (30:2.3cm) {$3$};
\node at (60:2.3cm) {$2$};
\node at (90:2.3cm) {$1$};
\node at (120:2.3cm) {$2n$};
\node at (180:2.3cm) {$n\!+\!4$};
\node at (210:2.45cm) {$n\!+\!3$};
\node at (240:2.45cm) {$n\!+\!2$};
\node at (270:2.45cm) {$n\!+\!1$};
\node at (300:2.3cm) {$n$};

\node at (-30:1.45cm) {\Huge{$.\hspace{0.015cm}.\hspace{0.015cm}.$}};
\node at (150:1.45cm) {\Huge{$.\hspace{0.015cm}.\hspace{0.015cm}.$}};
\end{scope}

\end{tikzpicture}
\captionof{figure}{ }
\end{center}
}\end{exmp}

\begin{exmp}\label{ex11}{\rm
Suppose that $\Gamma$ is the triangulation of a sphere obtained by the gluing of the two disks whose boundaries are cycles $e_1,e_2,\dots,e_6$ (see Fig. 10). There is a $z$-orientation $\tau$ such that all faces are of type I.
Then $\mathbb{S}^2\setminus\Gamma_{II}$ has precisely four connected components: three components are homeomorphic to an open $2$-dimensional disk and the remaining to an open cylinder.
The components of $(\Gamma,\tau)$ corresponding to the first three connected components are closed $2$-dimensional disks.
The fourth component of $(\Gamma,\tau)$ is homeomorphic to a closed cylinder $\mathbb{S}^{1}\times D^1$, where two points at one of the connected components of the boundary are glued.
\begin{center}
\begin{tikzpicture}[scale=0.8, xshift=0cm]

\draw[fill=black] (0,0) circle (1.75pt);
\draw[fill=black] (4.5,0) circle (1.75pt);

\draw[fill=black] (1.5,1.5) circle (1.75pt);
\draw[fill=black] (3,1.5) circle (1.75pt);

\draw[fill=black] (2.25,3) circle (1.75pt);

\draw[fill=black] (3,4.5) circle (1.75pt);
\draw[fill=black] (1.5,4.5) circle (1.75pt);

\draw[fill=black] (4.5,6) circle (1.75pt);
\draw[fill=black] (0,6) circle (1.75pt);

\draw[fill=black] (0,3) circle (1.75pt);
\draw[fill=black] (4.5,3) circle (1.75pt);

\draw [thick, decoration={markings,
mark=at position 0.51 with {\arrow[scale=2,>=stealth]{>}},
mark=at position 0.59 with {\arrow[scale=2,>=stealth]{>}}},
postaction={decorate}] (0,0) -- (0,3);
\draw [thick, decoration={markings,
mark=at position 0.51 with {\arrow[scale=2,>=stealth]{>}},
mark=at position 0.59 with {\arrow[scale=2,>=stealth]{>}}},
postaction={decorate}] (0,3) -- (0,6);

\draw [thick, decoration={markings,
mark=at position 0.525 with {\arrow[scale=2,>=stealth]{>}},
mark=at position 0.575 with {\arrow[scale=2,>=stealth]{>}}},
postaction={decorate}] (0,6) -- (4.5,6);

\draw [thick, decoration={markings,
mark=at position 0.51 with {\arrow[scale=2,>=stealth]{>}},
mark=at position 0.59 with {\arrow[scale=2,>=stealth]{>}}},
postaction={decorate}] (4.5,6) -- (4.5,3);
\draw [thick, decoration={markings,
mark=at position 0.51 with {\arrow[scale=2,>=stealth]{>}},
mark=at position 0.59 with {\arrow[scale=2,>=stealth]{>}}},
postaction={decorate}] (4.5,3) -- (4.5,0);

\draw [thick, decoration={markings,
mark=at position 0.525 with {\arrow[scale=2,>=stealth]{>}},
mark=at position 0.575 with {\arrow[scale=2,>=stealth]{>}}},
postaction={decorate}] (4.5,0) -- (0,0);

\draw [thick, decoration={markings,
mark=at position 0.58 with {\arrow[scale=2,>=stealth]{>}},
mark=at position 0.72 with {\arrow[scale=2,>=stealth]{>}}},
postaction={decorate}] (2.25,3) -- (1.5,4.5);
\draw [thick, decoration={markings,
mark=at position 0.58 with {\arrow[scale=2,>=stealth]{>}},
mark=at position 0.72 with {\arrow[scale=2,>=stealth]{>}}},
postaction={decorate}] (1.5,4.5) -- (3,4.5);
\draw [thick, decoration={markings,
mark=at position 0.58 with {\arrow[scale=2,>=stealth]{>}},
mark=at position 0.72 with {\arrow[scale=2,>=stealth]{>}}},
postaction={decorate}] (3,4.5) -- (2.25,3);

\draw [thick, decoration={markings,
mark=at position 0.58 with {\arrow[scale=2,>=stealth]{>}},
mark=at position 0.72 with {\arrow[scale=2,>=stealth]{>}}},
postaction={decorate}] (2.25,3) -- (3,1.5);
\draw [thick, decoration={markings,
mark=at position 0.58 with {\arrow[scale=2,>=stealth]{>}},
mark=at position 0.72 with {\arrow[scale=2,>=stealth]{>}}},
postaction={decorate}] (3,1.5) -- (1.5,1.5);
\draw [thick, decoration={markings,
mark=at position 0.58 with {\arrow[scale=2,>=stealth]{>}},
mark=at position 0.72 with {\arrow[scale=2,>=stealth]{>}}},
postaction={decorate}] (1.5,1.5) -- (2.25,3);

\draw[thick, line width=2pt] (0,0) -- (4.5,0) -- (4.5,6) -- (0,6) -- (0,0);
\draw[thick, line width=2pt] (1.5,1.5) -- (3,1.5) -- (2.25,3) -- (3,4.5) -- (1.5,4.5) -- (2.25,3) -- (1.5,1.5);

\draw[thick, dashed] (1.5,4.5) -- (2.25,3.9);
\draw[thick, dashed] (3,4.5) -- (2.25,3.9);
\draw[thick, dashed] (2.25,3) -- (2.25,3.9);

\draw[thick, dashed] (1.5,1.5) -- (2.25,2.1);
\draw[thick, dashed] (3,1.5) -- (2.25,2.1);
\draw[thick, dashed] (2.25,3) -- (2.25,2.1);

\draw[thick, dashed] (0,0) -- (1.5,1.5) -- (0,3) -- (2.25,3) -- (0,6) -- (1.5,4.5) -- (4.5,6) -- (3,4.5) -- (4.5,3) -- (2.25,3) -- (4.5,0) -- (3,1.5) -- (0,0);

\draw[fill=black] (2.25,3.9) circle (1.75pt);
\draw[fill=black] (2.25,2.1) circle (1.75pt);

\node at (2.25,-0.4) {$e_4$};
\node at (-0.4,1.5) {$e_5$};
\node at (-0.4,4.5) {$e_6$};
\node at (2.25,6.35) {$e_1$};
\node at (4.9,4.5) {$e_2$};
\node at (4.9,1.5) {$e_3$};

\begin{scope}[xshift=6.5cm]
\draw[fill=black] (0,0) circle (1.75pt);
\draw[fill=black] (4.5,0) circle (1.75pt);

\draw[fill=black] (4.5,6) circle (1.75pt);
\draw[fill=black] (0,6) circle (1.75pt);

\draw[fill=black] (0,3) circle (1.75pt);
\draw[fill=black] (4.5,3) circle (1.75pt);

\draw [thick, decoration={markings,
mark=at position 0.51 with {\arrow[scale=2,>=stealth]{>}},
mark=at position 0.59 with {\arrow[scale=2,>=stealth]{>}}},
postaction={decorate}] (0,0) -- (0,3);
\draw [thick, decoration={markings,
mark=at position 0.51 with {\arrow[scale=2,>=stealth]{>}},
mark=at position 0.59 with {\arrow[scale=2,>=stealth]{>}}},
postaction={decorate}] (0,3) -- (0,6);

\draw [thick, decoration={markings,
mark=at position 0.525 with {\arrow[scale=2,>=stealth]{>}},
mark=at position 0.575 with {\arrow[scale=2,>=stealth]{>}}},
postaction={decorate}] (0,6) -- (4.5,6);

\draw [thick, decoration={markings,
mark=at position 0.51 with {\arrow[scale=2,>=stealth]{>}},
mark=at position 0.59 with {\arrow[scale=2,>=stealth]{>}}},
postaction={decorate}] (4.5,6) -- (4.5,3);
\draw [thick, decoration={markings,
mark=at position 0.51 with {\arrow[scale=2,>=stealth]{>}},
mark=at position 0.59 with {\arrow[scale=2,>=stealth]{>}}},
postaction={decorate}] (4.5,3) -- (4.5,0);

\draw [thick, decoration={markings,
mark=at position 0.525 with {\arrow[scale=2,>=stealth]{>}},
mark=at position 0.575 with {\arrow[scale=2,>=stealth]{>}}},
postaction={decorate}] (4.5,0) -- (0,0);

\draw[thick, line width=2pt] (0,0) -- (4.5,0) -- (4.5,6) -- (0,6) -- (0,0);

\draw[thick, dashed] (0,0) -- (4.5,6);
\draw[thick, dashed] (0,3) -- (4.5,3);
\draw[thick, dashed] (0,6) -- (4.5,0);

\draw[fill=black] (2.25,3) circle (1.75pt);

\node at (2.25,-0.4) {$e_4$};
\node at (-0.4,1.5) {$e_5$};
\node at (-0.4,4.5) {$e_6$};
\node at (2.25,6.35) {$e_1$};
\node at (4.9,4.5) {$e_2$};
\node at (4.9,1.5) {$e_3$};
\end{scope}

\end{tikzpicture}
\captionof{figure}{ }
\end{center}
}\end{exmp}

\section{Relations to $Z$-monodromies}

Let $F$ be a face in $\Gamma$ whose vertices are $a,b,c$. 
Consider the set $\Omega(F)$ of all oriented edges of $F$
$$\Omega(F)=\{ab,bc,ca,ac,cb,ba\},$$
where $xy$ is the edge from $x\in\{a,b,c\}$ to $y\in\{a,b,c\}$. 
If $e=xy$ then we write $yx$ by $-e$.
Denote by $D_F$ the following permutation of the set $\Omega(F)$
$$(ab,bc,ca)(ac,cb,ba).$$
The {\it $z$-monodromy} (see \cite{PT2, PT3}) of the face $F$ is the permutation $M_F$ defined as follows. 
For any $e\in \Omega(F)$ we take $e_{0}\in \Omega(F)$ such that $D_{F}(e_{0})=e$
and consider the zigzag containing the sequence $e_{0},e$. 
We define $M_{F}(e)$ as the first element of  $\Omega(F)$ contained in this zigzag after $e$.
 
By \cite[Theorem 2]{PT2}, there are the following possibilities for the $z$-monodromy $M_{F}$ and each of them is realized:
\begin{enumerate}
\item[(M1)] $M_{F}$ is identity,
\item[(M2)] $M_{F}=D_{F}$,
\item[(M3)] $M_{F}=(-e_{1},e_{2},e_{3})(-e_{3},-e_{2},e_{1})$,
\item[(M4)] $M_{F}=(e_{1},-e_{2})(e_{2},-e_{1})$, where {\rm}$e_{3}$ and $-e_{3}$ are fixed points{\rm},
\item[(M5)] $M_{F}=(D_{F})^{-1}$,
\item[(M6)] $M_{F}=(-e_{1},e_{3},e_{2})(-e_{2},-e_{3},e_{1})$,
\item[(M7)] $M_{F}=(e_{1},e_{2})(-e_{1},-e_{2})$, where {\rm}$e_{3}$ and $-e_{3}$ are fixed points{\rm}
\end{enumerate}
where $(e_{1},e_{2},e_{3})$ is one of the cycles in $D_{F}$.

Let ${\mathrm G}_{i}$ be the subgraph of the dual $\Gamma^{*}$ formed by vertices corresponding to faces in $\Gamma$ whose $z$-monodromies are of type (M$i$), two vertices of ${\mathrm G}_{i}$ are adjacent if they are adjacent in $\Gamma^{*}$.
By \cite[Theorem 1]{PT3}, the subgraphs ${\mathrm G}_{1}$ and ${\mathrm G}_{2}$ are forests.
For (M3), (M4), (M5) and (M7) the above statement fails: 
$z$-monodromies of all faces of the bipyramid $BP_n$ are of type
\begin{enumerate}
\item[$\bullet$] (M3) for $n=2k+1$ where $k$ is odd,
\item[$\bullet$] (M4) for $n=2k+1$ where $k$ is even,
\item[$\bullet$] (M7) for $n=2k$ where $k$ is odd,
\item[$\bullet$] (M5) for $n=2k$ where $k$ is even.
\end{enumerate}

\begin{prop}\label{prop-3}
If $M_F$ is {\rm (M6)}, then $F$ is of type I for any $z$-orientation of $\Gamma$.
\end{prop}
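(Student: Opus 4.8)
The plan is to argue by contradiction through Proposition~\ref{prop-or}: since every face is either of type I or of type II, it suffices to prove that a face $F$ with $z$-monodromy (M6) is never of type II. So I fix an arbitrary $z$-orientation $\tau$ of $\Gamma$, assume that $F$ is of type II for $\tau$, and derive a contradiction with the explicit form of $M_F$.

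The core of the argument is the following claim: if $F$ is of type II for $\tau$, then $M_F$ maps each of the two orbits of $D_F$ onto itself. To prove it, I use the way the zigzags of $\tau$ visit $F$. Each edge of $F$ is traversed exactly twice by the zigzags of $\tau$; these traversals occur in ``passes'' through $F$ (a pass being two consecutive edges of $F$ along a zigzag), and by the local rule encoded in $D_F$ a pass has the form ``enter along $X$, leave along $D_F(X)$'' for an oriented edge $X$; since $F$ has three edges there are exactly three such $\tau$-passes (the counting already made in the proof of Proposition~\ref{prop-sh}). When $F$ is of type II, its three edges carry a common cyclic orientation, which is precisely one of the two $3$-cycles of $D_F$; let $A$ be its support (the set of type-II directions of the edges of $F$) and $B$ the support of the other $3$-cycle. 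Every $\tau$-pass then enters and leaves $F$ along edges oriented within $A$, so its entrance edge lies in $A$. Now let $e\in A$: the zigzag $Z$ used to define $M_F(e)$ contains $D_F^{-1}(e)\in A$ immediately before $e$, so $Z$ realizes a $\tau$-pass and hence $Z\in\tau$; after $e$ the zigzag leaves $F$, and its next return to $F$ is again a $\tau$-pass, whose entrance edge is by definition $M_F(e)$ and therefore lies in $A$. Thus $M_F(A)\subseteq A$, and injectivity of $M_F$ forces $M_F(A)=A$ and $M_F(B)=B$.

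Now I bring in case (M6). There $M_F=(-e_1,e_3,e_2)(-e_2,-e_3,e_1)$ for one of the two $3$-cycles $(e_1,e_2,e_3)$ of $D_F$; in particular $M_F(-e_1)=e_3$. But $-e_1$ and $e_3$ belong to different orbits of $D_F$ (the orbits being $\{-e_1,-e_2,-e_3\}$ and $\{e_1,e_2,e_3\}$), so $M_F$ does not map the orbits of $D_F$ to themselves. This contradicts the claim of the previous paragraph, so $F$ cannot be of type II for $\tau$; by Proposition~\ref{prop-or} it is of type I. As $\tau$ was arbitrary, $F$ is of type I for every $z$-orientation of $\Gamma$.

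The step I expect to be the main obstacle is the precise description of the three $\tau$-passes for a type-II face: one must be sure that, because every edge of a type-II face is traversed consistently in a single direction and a pass must respect $D_F$, the entrance edge of every $\tau$-pass lies in the orbit $A$ (ruling out ``reversed'' passes). The only other point requiring care is that $M_F(e)$ is read along a zigzag of $\tau$ and not of $\tau^{-1}$, which is exactly guaranteed by the fact that the pass $(D_F^{-1}(e),e)$ with $e\in A$ is a $\tau$-pass; after that the conclusion is just an unwinding of the definitions of $M_F$ and of the type of an edge.
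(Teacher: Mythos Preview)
Your proof is correct, but it takes a different route from the paper's. The paper argues directly: from $M_F=(-e_1,e_3,e_2)(-e_2,-e_3,e_1)$ one reads off $M_F(e_2)=-e_1$, so the single zigzag containing the pass $e_1,e_2$ next enters $F$ as $-e_1$; hence that zigzag traverses the edge $\{e_1,-e_1\}$ in both directions, making it of type I for \emph{every} $z$-orientation, and therefore $F$ can never be of type II. This is a two-line computation once the zigzag structure is written out.

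Your argument packages the same phenomenon structurally: a type-II face forces the three $\tau$-passes to lie in a single $D_F$-orbit $A$, and then a short tracking argument shows $M_F(A)=A$. This gives a clean criterion (``$M_F$ preserves the $D_F$-orbits'') that (M6) visibly violates. The approach is longer but more conceptual: it would immediately tell you which of the seven monodromy types are compatible with a type-II face (exactly those permutations stabilising both $D_F$-orbits, namely (M1), (M2), (M5), (M7)), whereas the paper's argument is tailored to the single edge that witnesses the failure. The one step you rightly flagged---that for $e\in A$ the defining zigzag $Z$ of $M_F(e)$ lies in $\tau$ rather than $\tau^{-1}$---is indeed the only delicate point, and it follows because the reversed pass $(-e,D_F(-e))$ would enter via $B$, contradicting that all $\tau$-passes enter via $A$.
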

\begin{proof}
Let $e_1,e_2,e_3$ be consecutive oriented edges of the face $F$. We suppose that the $z$-monodromy of $F$ is (M6), i.e.
$$M_{F}=(-e_{1},e_{3},e_{2})(-e_{2},-e_{3},e_{1}).$$
There are precisely two zigzags (up to reversing) which contain $F$
$$e_1,e_2,\dots,-e_1,-e_3,\dots
\;\mbox{and }\; e_2,e_3,\dots\,;$$
since the edge corresponding to the pair $\{e_1,-e_1\}$ is passed in two different directions by the same zigzag, then it is of type I for any orientation of the zigzag.
Therefore, $F$ is of type I for any $z$-orientation.
\end{proof}

\begin{lemma}\label{lemma-2}
Let $F$ be a face in $(\Gamma,\tau)$ such that there are precisely two zigzags from $\tau$ which contain edges from $F$. 
Then the following assertions are fulfilled:
\begin{enumerate}
\item[(1)] There is a unique edge $e\in F$ which occurs in one of these zigzags twice,
\item[(2)] The type of $e$ does not depend on the choice of $z$-orientation,
\item[(3)] If $e$ is of type I, then $M_F$ is {\rm (M6)}. If $e$ is of type II, then $M_F$ is {\rm (M7)}.
\end{enumerate}
\end{lemma}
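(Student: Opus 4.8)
The plan is to determine, purely combinatorially, how the two zigzags run around $F$, and then read off all three assertions from that picture.

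\medskip
\noindent\emph{Step 1: the local configuration, giving (1).} Call a \emph{turn at $F$} a consecutive pair of edges of a zigzag that both lie on $F$; there are six of them --- at each vertex of $F$ an incoming--outgoing pair and its reverse --- grouped into three reversal-pairs. A turn and its reverse are realised by a zigzag and its reverse, so exactly one turn of each pair is performed by a zigzag belonging to $\tau$; hence $\tau$ performs exactly three turns at $F$. By the double cover property each edge of $F$ is traversed exactly twice by $\tau$, and, because immediately after a turn at $F$ a zigzag leaves $F$ (two distinct edges of $F$ lie on a common face only if that face is $F$, the intersection of two distinct faces being an edge, a vertex, or empty), each traversal of an edge of $F$ belongs to exactly one turn at $F$; hence each edge of $F$ lies in exactly two of the three turns. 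Moreover a zigzag cannot perform two consecutive turns at $F$, so it cannot perform two of the three turns at the same vertex. Now suppose only two zigzags $Z_{1},Z_{2}\in\tau$ meet $F$. Since meeting $F$ is the same as performing at least one of the three turns, the three turns split as $2+1$, and after relabelling we may take $Z_{1}$ to turn at two vertices $v_{1},v_{2}$ of $F$ and $Z_{2}$ to turn at the third vertex $v_{3}$. The edge $e$ with endpoints $v_{1},v_{2}$ then occurs in both turns of $Z_{1}$ and in neither turn of $Z_{2}$, so it occurs twice in $Z_{1}$; each of the other two edges of $F$ occurs once in $Z_{1}$ and once in $Z_{2}$. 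This is (1).

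\medskip
\noindent\emph{Step 2: independence of the $z$-orientation, giving (2).} Passing to any other $z$-orientation changes neither the set of underlying zigzags meeting $F$ nor which of them meets $F$ twice, so the edge $e$ is intrinsic. Reversing the zigzag through $e$ reverses both of its traversals of $e$ at once, so whether those two traversals have the same or opposite directions --- i.e.\ whether $e$ is of type II or of type I --- does not depend on the $z$-orientation.

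\medskip
\noindent\emph{Step 3: computing $M_{F}$, giving (3).} The key point is that $M_{F}$ sends the outgoing edge of a turn $T$ at $F$ to the incoming edge of the \emph{next} turn at $F$ performed, along itself, by the zigzag realising $T$: after $T$ the zigzag leaves $F$, and the first oriented edge of $\Omega(F)$ met afterwards is exactly the incoming edge of the following turn at $F$ (again two edges of $F$ lie only on $F$). Apply this to the configuration of Step 1. The turn of $Z_{2}$ (resp.\ of $Z_{2}^{-1}$) at $v_{3}$ is the only turn at $F$ of that zigzag, so it returns to its own incoming edge; since $D_{F}(v_{1}v_{3})=v_{3}v_{2}$ and $D_{F}(v_{2}v_{3})=v_{3}v_{1}$, this forces $M_{F}(v_{3}v_{2})=v_{1}v_{3}$ and $M_{F}(v_{3}v_{1})=v_{2}v_{3}$, irrespective of the $z$-orientation and of the direction of $Z_{2}$. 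For the remaining four oriented edges the relevant turn is one of the two turns of $Z_{1}$ or of $Z_{1}^{-1}$, whose following turn at $F$ is the other turn of the same zigzag; thus $M_{F}$ is completely determined once we fix, after relabelling $v_{1}\leftrightarrow v_{2}$ if necessary, that $Z_{1}$'s turn at $v_{1}$ is $(v_{3}v_{1},v_{1}v_{2})$. Exactly two cases remain for $Z_{1}$'s turn at $v_{2}$. If it is $(v_{1}v_{2},v_{2}v_{3})$, then $Z_{1}$ runs through $e$ twice in the same direction, so $e$ is of type II, and filling in the four values gives $M_{F}=(v_{2}v_{3},v_{3}v_{1})(v_{3}v_{2},v_{1}v_{3})$ with $v_{1}v_{2},v_{2}v_{1}$ fixed; taking the $D_{F}$-cycle $(v_{2}v_{3},v_{3}v_{1},v_{1}v_{2})$ as $(e_{1},e_{2},e_{3})$ this is exactly {\rm (M7)}. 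If it is $(v_{3}v_{2},v_{2}v_{1})$, then $Z_{1}$ runs through $e$ in opposite directions, so $e$ is of type I, and one gets $M_{F}=(v_{1}v_{2},v_{3}v_{2},v_{1}v_{3})(v_{2}v_{1},v_{3}v_{1},v_{2}v_{3})$, which with the $D_{F}$-cycle $(v_{1}v_{2},v_{2}v_{3},v_{3}v_{1})$ as $(e_{1},e_{2},e_{3})$ is exactly {\rm (M6)}. This proves (3).

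\medskip
The delicate part is Step 3: tracking orientations through the turns and matching the resulting permutation to the normal forms {\rm (M6)}, {\rm (M7)}. The single load-bearing ingredient is the ``next turn'' description of $M_{F}$, which is what makes the computation finite and independent of the global structure of the zigzags; parts (1) and (2) are light bookkeeping once the turn count of Step 1 is available.
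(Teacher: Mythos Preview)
Your argument is correct and is organised around a clean local picture of turns at $F$; parts (1) and (2) match the paper's reasoning almost verbatim. The real difference is in (3). The paper does not compute $M_F$ from scratch: it invokes \cite[Remark 2]{PT2} to conclude a priori that under the hypothesis $M_F$ must be {\rm (M6)} or {\rm (M7)}, then reads off the type of $e$ in each case (using Proposition~\ref{prop-3} for {\rm (M6)} and a one-line check for {\rm (M7)}). Your approach is the opposite: you recover $M_F$ directly from the two possible turn configurations via the ``next turn'' description, obtaining {\rm (M6)} and {\rm (M7)} without appealing to the classification in \cite{PT2}. This makes your proof self-contained, at the price of an explicit permutation computation; the paper's proof is shorter but leans on an external result.

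One small wrinkle in Step 3: the reduction ``after relabelling $v_1\leftrightarrow v_2$ if necessary'' does not by itself cover all four possibilities for $Z_1$'s pair of turns. Swapping $v_1$ and $v_2$ interchanges the two ``mixed'' configurations (one turn leaving along $e$, the other entering), but it fixes each of the two ``uniform'' configurations (both leaving, or both entering along $e$). The missing case---$Z_1$ performing $(v_2v_1,v_1v_3)$ and $(v_1v_2,v_2v_3)$---is exactly the $Z_1^{-1}$ of your Case~B, so it yields the same $M_F$; since by (2) the type of $e$ is $\tau$-independent, you may equally well rename $Z_1\leftrightarrow Z_1^{-1}$ to land in Case~B. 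With that adjustment the case analysis is complete.
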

\begin{proof}
(1). Any face occurs precisely thrice, as a pair of its adjacent edges, in zigzags from the $z$-orientation $\tau$.
By the assumption, there are precisely two zigzags from $\tau$ which pass through our face. 
This is possible only when one of these zigzags passes through it once and the second twice. 

(2). The edge $e$ can occur in the same zigzag twice in two ways: the zigzag passes through $e$ the first time in one of directions and the second time in the opposite (type I) or the zigzag passes through $e$ twice in the same direction (type II). 
It is easy to see that the type of $e$ is the same for any $z$-orientation of $\Gamma$.

(3). By \cite[Remark 2]{PT2} the $z$-monodromy of the face $F$ is (M6) or (M7). 
In the case (M6) the statement follows form Proposition \ref{prop-3}. 
Let $e_1, e_2, e_3$ be consecutive edges of $F$ and $M_F$ be of type (M7), i.e. 
$$M_{F}=(e_{1},e_{2})(-e_{1},-e_{2}).$$
In this case, $F$ occurs twice in the zigzag
$$e_2,e_3,\dots, e_3, e_1, \dots$$
and $e_3$ is of type II for any $z$-orientation of $\Gamma$.
\end{proof}

Now, we can construct a class of toric triangulations, where $z$-monodromies are of type (M6) for all faces. 
Our arguments are based on Lemma \ref{lemma-2}.
\begin{exmp}{\rm
Let $n,m$ be odd numbers not less than $3$ and let $\Gamma_0$ be a $n\times m$ grid where the opposite sides are identified.
Then $\Gamma_0$ can be embedded into a torus in the natural way.
Suppose that $\Gamma=\text{T}(\Gamma_0)$ (see Fig. 11 for the case $n=m=3$).

\begin{center}
\begin{tikzpicture}[scale=0.7, xshift=0cm]

\draw[fill=black] (0,0) circle (1.75pt);
\draw[fill=black] (2,0) circle (1.75pt);
\draw[fill=black] (4,0) circle (1.75pt);
\draw[fill=black] (6,0) circle (1.75pt);

\draw[fill=black] (0,2) circle (1.75pt);
\draw[fill=black] (2,2) circle (1.75pt);
\draw[fill=black] (4,2) circle (1.75pt);
\draw[fill=black] (6,2) circle (1.75pt);

\draw[fill=black] (0,4) circle (1.75pt);
\draw[fill=black] (2,4) circle (1.75pt);
\draw[fill=black] (4,4) circle (1.75pt);
\draw[fill=black] (6,4) circle (1.75pt);

\draw[fill=black] (0,6) circle (1.75pt);
\draw[fill=black] (2,6) circle (1.75pt);
\draw[fill=black] (4,6) circle (1.75pt);
\draw[fill=black] (6,6) circle (1.75pt);

\draw[fill=black] (1,1) circle (1.75pt);
\draw[fill=black] (3,1) circle (1.75pt);
\draw[fill=black] (5,1) circle (1.75pt);

\draw[fill=black] (1,3) circle (1.75pt);
\draw[fill=black] (3,3) circle (1.75pt);
\draw[fill=black] (5,3) circle (1.75pt);

\draw[fill=black] (1,5) circle (1.75pt);
\draw[fill=black] (3,5) circle (1.75pt);
\draw[fill=black] (5,5) circle (1.75pt);

\draw[thick, line width=2pt] (0,0) -- (6,0) -- (6,6) -- (0,6) -- cycle;
\draw[thick, line width=2pt] (2,0) -- (2,6);
\draw[thick, line width=2pt] (4,0) -- (4,6);
\draw[thick, line width=2pt] (0,2) -- (6,2);
\draw[thick, line width=2pt] (0,4) -- (6,4);

\draw[thick] (0,4) -- (4,0);
\draw[thick] (0,2) -- (2,0);
\draw[thick] (0,6) -- (6,0);
\draw[thick] (2,6) -- (6,2);
\draw[thick] (4,6) -- (6,4);

\draw[thick] (0,2) -- (4,6);
\draw[thick] (0,4) -- (2,6);
\draw[thick] (0,0) -- (6,6);
\draw[thick] (4,0) -- (6,2);
\draw[thick] (2,0) -- (6,4);

\draw [->]  (-0.3,0.2) -- (-0.3, 5.8);
\draw [->]  (6.3,0.2) -- (6.3, 5.8);

\draw [->]  (0.2,-0.3) -- (5.8, -0.3);
\draw [->]  (0.2, 6.3) -- (5.8, 6.3);

\node at (1,-0.55) {$e_4$};
\node at (3,-0.55) {$e_5$};
\node at (5,-0.55) {$e_6$};

\node at (1,6.55) {$e_4$};
\node at (3,6.55) {$e_5$};
\node at (5,6.55) {$e_6$};

\node at (-0.57,1) {$e_1$};
\node at (-0.57,3) {$e_2$};
\node at (-0.57,5) {$e_3$};

\node at (6.63,1) {$e_1$};
\node at (6.63,3) {$e_2$};
\node at (6.63,5) {$e_3$};

\end{tikzpicture}
\captionof{figure}{ }
\end{center}
Each zigzag of $\Gamma$ determines a band formed by $n$ or $m$ squares from the grid (see Fig. 12 for a band consisting of $5$ squares) and passes through each face of this band twice.
\begin{center}
\begin{tikzpicture}[scale=0.8, xshift=0cm]

\draw[fill=black] (0,0) circle (1.75pt);
\draw[fill=black] (2,0) circle (1.75pt);
\draw[fill=black] (4,0) circle (1.75pt);
\draw[fill=black] (6,0) circle (1.75pt);
\draw[fill=black] (8,0) circle (1.75pt);
\draw[fill=black] (10,0) circle (1.75pt);

\draw[fill=black] (0,2) circle (1.75pt);
\draw[fill=black] (2,2) circle (1.75pt);
\draw[fill=black] (4,2) circle (1.75pt);
\draw[fill=black] (6,2) circle (1.75pt);
\draw[fill=black] (8,2) circle (1.75pt);
\draw[fill=black] (10,2) circle (1.75pt);

\draw[fill=black] (1,1) circle (1.75pt);
\draw[fill=black] (3,1) circle (1.75pt);
\draw[fill=black] (5,1) circle (1.75pt);
\draw[fill=black] (7,1) circle (1.75pt);
\draw[fill=black] (9,1) circle (1.75pt);

\draw[thick, line width=2pt] (0,0) -- (0,2) -- (10,2) -- (10,0) -- (0,0);
\draw[thick, line width=2pt] (2,0) -- (2,2);
\draw[thick, line width=2pt] (4,0) -- (4,2);
\draw[thick, line width=2pt] (6,0) -- (6,2);
\draw[thick, line width=2pt] (8,0) -- (8,2);

\draw [thick, decoration={markings,
mark=at position 0.48 with {\arrow[scale=2]{<}},
mark=at position 0.63 with {\arrow[scale=2]{>}}},
postaction={decorate}] (0,0) -- (0,2);
\draw [thick, decoration={markings,
mark=at position 0.48 with {\arrow[scale=2]{<}},
mark=at position 0.63 with {\arrow[scale=2]{>}}},
postaction={decorate}] (2,0) -- (2,2);
\draw [thick, decoration={markings,
mark=at position 0.48 with {\arrow[scale=2]{<}},
mark=at position 0.63 with {\arrow[scale=2]{>}}},
postaction={decorate}] (4,0) -- (4,2);
\draw [thick, decoration={markings,
mark=at position 0.48 with {\arrow[scale=2]{<}},
mark=at position 0.63 with {\arrow[scale=2]{>}}},
postaction={decorate}] (6,0) -- (6,2);
\draw [thick, decoration={markings,
mark=at position 0.48 with {\arrow[scale=2]{<}},
mark=at position 0.63 with {\arrow[scale=2]{>}}},
postaction={decorate}] (8,0) -- (8,2);
\draw [thick, decoration={markings,
mark=at position 0.48 with {\arrow[scale=2]{<}},
mark=at position 0.63 with {\arrow[scale=2]{>}}},
postaction={decorate}] (10,0) -- (10,2);

\draw [thick,decoration={markings,
mark=at position 0.6 with {\arrow[scale=2]{>}}},
postaction={decorate}] (0,2) -- (1,1);
\draw [thick,decoration={markings,
mark=at position 0.6 with {\arrow[scale=2]{>}}},
postaction={decorate}] (1,1) -- (2,2);
\draw [thick,decoration={markings,
mark=at position 0.6 with {\arrow[scale=2]{>}}},
postaction={decorate}] (2,2) -- (3,1);
\draw [thick,decoration={markings,
mark=at position 0.6 with {\arrow[scale=2]{>}}},
postaction={decorate}] (3,1) -- (4,2);
\draw [thick,decoration={markings,
mark=at position 0.6 with {\arrow[scale=2]{>}}},
postaction={decorate}] (4,2) -- (5,1);
\draw [thick,decoration={markings,
mark=at position 0.6 with {\arrow[scale=2]{>}}},
postaction={decorate}] (5,1) -- (6,2);
\draw [thick,decoration={markings,
mark=at position 0.6 with {\arrow[scale=2]{>}}},
postaction={decorate}] (6,2) -- (7,1);
\draw [thick,decoration={markings,
mark=at position 0.6 with {\arrow[scale=2]{>}}},
postaction={decorate}] (7,1) -- (8,2);
\draw [thick,decoration={markings,
mark=at position 0.6 with {\arrow[scale=2]{>}}},
postaction={decorate}] (8,2) -- (9,1);
\draw [thick,decoration={markings,
mark=at position 0.6 with {\arrow[scale=2]{>}}},
postaction={decorate}] (9,1) -- (10,2);

\draw [thick,decoration={markings,
mark=at position 0.6 with {\arrow[scale=2]{>}}},
postaction={decorate}] (0,0) -- (1,1);
\draw [thick,decoration={markings,
mark=at position 0.6 with {\arrow[scale=2]{>}}},
postaction={decorate}] (1,1) -- (2,0);
\draw [thick,decoration={markings,
mark=at position 0.6 with {\arrow[scale=2]{>}}},
postaction={decorate}] (2,0) -- (3,1);
\draw [thick,decoration={markings,
mark=at position 0.6 with {\arrow[scale=2]{>}}},
postaction={decorate}] (3,1) -- (4,0);
\draw [thick,decoration={markings,
mark=at position 0.6 with {\arrow[scale=2]{>}}},
postaction={decorate}] (4,0) -- (5,1);
\draw [thick,decoration={markings,
mark=at position 0.6 with {\arrow[scale=2]{>}}},
postaction={decorate}] (5,1) -- (6,0);
\draw [thick,decoration={markings,
mark=at position 0.6 with {\arrow[scale=2]{>}}},
postaction={decorate}] (6,0) -- (7,1);
\draw [thick,decoration={markings,
mark=at position 0.6 with {\arrow[scale=2]{>}}},
postaction={decorate}] (7,1) -- (8,0);
\draw [thick,decoration={markings,
mark=at position 0.6 with {\arrow[scale=2]{>}}},
postaction={decorate}] (8,0) -- (9,1);
\draw [thick,decoration={markings,
mark=at position 0.6 with {\arrow[scale=2]{>}}},
postaction={decorate}] (9,1) -- (10,0);

\draw [->]  (-0.3,0.2) -- (-0.3, 1.8);
\draw [->]  (10.3,0.2) -- (10.3, 1.8);

\end{tikzpicture}
\captionof{figure}{ }
\end{center}
Observe that the edges common for two consecutive squares from the grid are passed twice (they marked on Fig. 12 by the bold line) and are of type I for any $z$-orientation. 
Remaining edges are passed by the zigzag once. 
Therefore, all edges of subgraph $\Gamma_0$ are of type I and all faces of $\Gamma$ are of type I for any $z$-orientation.
It is clear that any edge incident to a vertex in the interior of a square occurs once in two different zigzags. 
Thus, for any face of $\Gamma$ there are precisely two zigzags which pass it. 
Lemma \ref{lemma-2} guarantees that $z$-monodromies of all faces of $\Gamma$ are (M6).
}\end{exmp}

\subsection*{Acknowledgment}
The author is grateful to Mark Pankov for useful discussions.

\end{document}